\newcolumntype{Y}{>{\raggedright\arraybackslash}X} 
\tikzset{
  block/.style={draw,rounded corners,thick,align=center,
                minimum width=3.2cm,minimum height=1.15cm},
  smallblock/.style={draw,rounded corners,thick,align=center,
                minimum width=3.0cm,minimum height=1.05cm},
  line/.style={-Latex,very thick},
  sig/.style={-Latex,thick},
  coupling/.style={dashed,-{Stealth[length=2.4mm]},thick},
  note/.style={font=\footnotesize}
}
\def\fskip#1{}
\newtheorem{theorem}{Theorem }
\newtheorem{lemma}[theorem]{Lemma}
\def\be{\begin{enumerate}}
\def\ee{\end{enumerate}}
		\def\bkE{{\rm I\kern-.17em E}}
		\def\bk1{{\rm 1\kern-.17em l}}
		\def\bkD{{\rm I\kern-.17em D}}
		\def\bkR{{\rm I\kern-.17em R}}
		\def\bkP{{\rm I\kern-.17em P}}
		\def\bkY{{\bf \kern-.17em Y}}
		\def\bkZ{{\bf \kern-.17em Z}}
\title{Decentralized Disturbance Rejection Control of Triangularly
Coupled Loop Thermosyphon System}
\author{Novel~ Kumar ~Dey and Yan ~Wu\thanks{Novel and Yan are with Department of Applied Mathematics and Department of Mathematical Sciences, respectively at University of Arizona, Tucson, USA and Georgia Southern University, USA. They are reachable at ({\tt
		ndey@arizona.edu,yan@georgiasouthern.edu}).}}
\begin{document}
\maketitle
\thispagestyle{empty}
\pagestyle{empty}
\begin{abstract}
In this paper, we investigate the stability of a triangularly coupled triple loop thermosyphon system with momentum and heat exchange at the coupling point as well as the existence of disturbances. The controller consists of a single, local state feedback. From the stability analysis, we obtain explicit bounds on the feedback gains, which depend on the Rayleigh numbers and the momentum coupling parameter, but independent of the thermal coupling parameter. The existence of the stability bounds allows us to design decentralized adaptive controllers to automatically search for the feasible gains when the system parameters are unknown. In the case of existing disturbances in the system, we approximate the disturbances via an extended state observer for the purpose of disturbance rejection. Numerical results are given to demonstrate the performance of the proposed decentralized disturbance rejection controller design.
	\end{abstract}
\textbf{keywords:} {Loop thermosyphon; triangularly coupled; linear coupling; decentralized control; adaptive control; extended state observer; Luenberger observer; active disturbance rejection control; Lyapunov stability}
\section{Introduction}\label{sec:intro}
Large networks of interconnected dynamical systems are ubiquitous in modern engineering applications \cite{ref1}. For instance, the development of cross-directional control systems for paper machines, power distribution systems, automated highways, formations
of unmanned aerial vehicles, and arrays of microcantilevers for massively parallel data storage, to name a few. Large-scale dynamical systems usually consist of a few
compartmental subsystems that interact with each other through different means of communication. In the context of nonlinear dynamical systems, a large-scale system is characterized by multiple positive Lyapunov exponents or known as hyperchaotic systems. These interconnected dynamical systems are of theoretical interest because they pose new challenges for analysis and control design. In this paper, we consider a triangularly coupled thermosyphon loop system, see Figure \ref{fig1}, in which the loops are heated from below and cooled from above. There exist momentum and heat couplings between the adjacent loops. In other words, the flows in each loop interfere with each other due to the heat and mass exchanges at the contact area. Such interference intensifies dramatically when the external heat increases. The flows evolve from heat conductions, steady convective flows to chaos as the external heat applied to the loops increases.
\begin{figure}[H]
\centering
\includegraphics[width=0.45\textwidth]{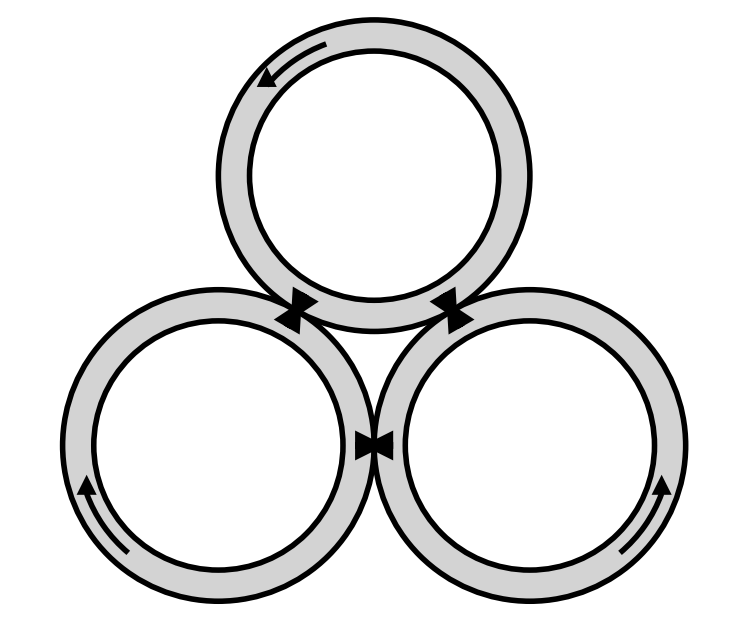}
\caption{Triangularly coupled loop thermosyphon system.}
\label{fig1}
\end{figure}
The mathematical model for a laterally coupled double loop system was developed in \cite{ref2}. We applied the same modeling process to the triangularly coupled triple loop system and obtained the following governing equations
\begin{align}\label{Eqn1.1}
\begin{cases}
& \dot{x}_1=p\left\{\left(y_1-x_1\right)-\gamma_1\left(x_1-x_2\right)-\gamma_2\left(x_1-x_3\right)\right\} \\
& \dot{y}_1=R_1 x_1 -x_1 z_1 -u_1\\
& \dot{z}_1=x_1 y_1-z_1-\eta_1\left(z_1-z_2\right)-\eta_2\left(z_1-z_3\right) \\
& \dot{x}_2=p\left\{\left(y_2-x_2\right)-\gamma_1\left(x_2-x_1\right)-\gamma_3\left(x_2-x_3\right)\right\} \\
& \dot{y}_2=R_2 x_2 -x_2 z_2 -u_2\\
& \dot{z}_2=x_2 y_2-z_2-\eta_1\left(z_2-z_1\right)-\eta_3\left(z_2-z_3\right) \\
& \dot{x}_3=p\left\{\left(y_3-x_3\right)-\gamma_2\left(x_3-x_1\right)-\gamma_3\left(x_3-x_2\right)\right\} \\
& \dot{y}_3=R_3 x_3 -x_3 z_3 -u_3\\
& \dot{z}_3=x_3 y_3-z_3-\eta_2\left(z_3-z_1\right)-\eta_3\left(z_3-z_2\right),
\end{cases}
\end{align}
where $x$-states represent the fluid velocity, the $y,z$-states correspond to the horizontal and vertical temperature differences, respectively. The key parameters are the Rayleigh numbers $R_{1,2,3}$, which are proportional to the external heat applied to the loop; $\gamma_{_{1,2,3}}$ are proportional to the momentum coupling intensity between the adjacent loops, and $\eta_{_{1,2,3}}$ correspond to the thermal coupling. Both coupling intensity parameters are between 0
and 1 \cite{ref2}. The $u_{_{1,2,3}}$ are the decentralized controllers, which is given by 
\begin{align*}
    u_i= -k_i(y_i-y_{r_i}), \qquad i=1,2,3,
\end{align*}
where $k_{1,2,3}$ are the positive state feedback gains and $y_{r_{1,2,3}}$ represent the reference inputs or tracking signals. With large Rayleigh numbers, the triple-loop system \eqref{Eqn1.1} is hyperchaotic because it has two positive Lyapunov exponents based on our computation. The system model \eqref{Eqn1.1} consists of three sets of differential equations, corresponding to the three thermosyphon loops, respectively. In each set, the $x$-equation governs the average velocity of the fluid flow, the $y$-equation prescribes the dynamics of the horizontal temperature difference across the loop, and the vertical temperature difference state satisfies the $z$-equation. There exists momentum and thermal exchanges between adjacent loops through the contact area. This is reflected in \eqref{Eqn1.1} through the coupling terms involving states from the neighboring loops. Our control objective is to stabilize all nine states to the equilibrium point while high external heat is applied to the loops (large Rayleigh numbers). We use the fourth order Runge-Kutta method to carry out all the numerical simulations. The difference between the model equations \eqref{Eqn1.1} and the dynamical equations for the laterally coupled four-loop system in \cite{ref15} is that each of the $x$-equation and $z$-equation has two coupling terms due to the physical interconnection between the loops, whereas these two-coupling terms only exist in the dynamical equations for the middle loops in \cite{ref15}. This subtle difference does not bring about major discrepancies in the dynamical behavior after we compare their simulation results, which are actually quite similar. However, there exists a unique symmetry in the stability bounds on the feedback gains in the three-loop system that is not observed from the bounds obtained in \cite{ref14} and \cite{ref15}. This will be discussed in detail in section \ref{sec2}. To connect simulations with measurable quantities in an experiment, we summarize below the notation and physical meaning of all the core parameters in \eqref{Eqn1.1}. Details of the non-dimensionalization and calculation formulas for the parameters can be found in \cite{ref2}. Below is the nomenclature and physical parameter definitions:
\begin{tabular}{p{0.18\linewidth}p{0.75\linewidth}}
$R_i$ & Rayleigh number of loop $i$, proportional to external heat applied to the loop. \\
$\gamma_i$ & Momentum-coupling coefficient between adjacent loops at a shared junction; proportional to the intensity of momentum exchange. \\
$\eta_i$ & Thermal coupling parameter between adjacent loops; proportional to the intensity of thermal exchange . \\
$x_i$ & Average fluid velocity state in loop $i$. \\
$y_i$ & Horizontal temperature-difference state in loop $i$. \\
$z_i$ & Vertical temperature-difference state in loop $i$. \\
$u_i$ & Local control input applied to loop $i$. \\
$k_i$ & Proportional feedback gain in $u_i=-k_i\,y_i$.\\
\end{tabular}\\
 
For an interconnected dynamical system such as \eqref{Eqn1.1} as well as many applied control problems of large-scale systems, decentralized control has been the forefront runner in controller design. In this paradigm, a local controller is constructed for each subsystem using only the local state for feedback purposes. The merits of decentralized control go
beyond simplification of the controller structure, it also increases reliability, scalability,
and flexibility, and lowers the cost and complexity \cite{ref3,ref4}. Decentralized control is also advantageous in that the system can tolerate faults and failures in some components without affecting the whole system \cite{ref5}. In many practical situations, there are
uncertainties associated with the system due to model simplification, unmodeled dynamics, measurement errors, truncation errors, and unpredictable external inputs to the
system. When it comes to large scale interconnected dynamical systems with
uncertainties, decentralized robust control plays an important role in front of these challenges, such as the design of decentralized state feedback control in the presence of
uncertainties and feedback delays in the interconnected systems \cite{ref6, ref7}. Many well-known control techniques are implemented with decentralized control over large scale systems.
This includes, decentralized adaptive backstepping control for a large class of interconnected systems with unknown interactions between their subsystems \cite{ref8}, output feedback control with nonlinear coupling \cite{ref9}, optimal control of interconnected systems via block pulse functions parameterization \cite{ref10}, and more. Another integral component
for robust decentralized control systems is the reliable state observers \cite{ref11} when the states are unknown. Some robust state observers are proposed for large scale systems with uncertainties \cite{ref12}, and completely decentralized state observers are introduced in
\cite{ref13}. The work related to this paper can be found in \cite{ref14}, where the adaptive control of laterally coupled two-loop system using proportional state feedback control is studied.
The uncertainties therein are estimated via wavelet neural networks. The results are extended to high-dimensional four-loop systems in \cite{ref15}, in which much more sophisticated stability bounds on the control gains are derived. 

As far as the system \eqref{Eqn1.1} is concerned, the control objective is to stabilize the chaotic system by using a minimum number of decentralized controllers. Our controllability
analysis reveals that only three controllers are needed, which depends on the local $y$-state of the system. In other words, the proportional controllers $u_i$ in \eqref{Eqn1.1} are of the form: $u_i=-k_iy_i, i=1,2,3$. Another major advantage of using the $y$-state for feedback purposes is that it is corresponding to the horizontal temperature difference across the loop, which is easily measurable. In terms of measurement and data acquisition through sensors, the $y$-state samples are the only data collected for the controller design, including the extended state observer. The main contributions of this paper are in two-fold:
\begin{itemize}
\item [(i)] prove the existence of a lower bound on the feedback gain $k_i$ that guarantees the global stability of
the model-based decentralized control system \eqref{Eqn1.1}, and
\item[(ii)] building upon the proportional control, we design active disturbance rejection controllers (ADRC) to eliminate the impact of disturbances so that the triangularly coupled triple loop system with
uncertainties remains globally asymptotically stable.
\end{itemize}
The rest of the paper is organized as follows: we present explicit bounds that depend on the key system parameters on the feedback gains in section \ref{sec2}. In the same section, we present results on the adaptive gain search when the system parameters are unknown. In section \ref{sec3}, we consider a more practical system with uncertainties originating from dynamical modeling and external disturbances. Such disturbances are reconstructed through decentralized extended state observers (ESO) so that they can be eliminated through the ADRC approach. We also
prove the asymptotic stability of the ADRC system in the section along with numerical simulations, followed by some conclusive remarks in section \ref{sec4}


\section {Stability Bounds on Feedback Gains}\label{sec2} 
Our goal is to develop a robust adaptive control system to stabilize the flows in an interconnected triangulated coupled system. More specifically, we consider the following $y$-state feedback system after setting $y_{r_i}=0$, $i=1,2,3,$ in \eqref{Eqn1.1}
\begin{equation}\label{Eqn2.1}
\begin{cases}
& \dot{x}_1=p\left\{\left(y_1-x_1\right)-\gamma_1\left(x_1-x_2\right)-\gamma_2\left(x_1-x_3\right)\right\} \\
& \dot{y}_1=R_1 x_1 -x_1 z_1 -k_1y_1\\
& \dot{z}_1=x_1 y_1-z_1-\eta_1\left(z_1-z_2\right)-\eta_2\left(z_1-z_3\right) \\
& \dot{x}_2=p\left\{\left(y_2-x_2\right)-\gamma_1\left(x_2-x_1\right)-\gamma_3\left(x_2-x_3\right)\right\} \\
& \dot{y}_2=R_2 x_2 -x_2 z_2 -k_2y_2\\
& \dot{z}_2=x_2 y_2-z_2-\eta_1\left(z_2-z_1\right)-\eta_3\left(z_2-z_3\right) \\
& \dot{x}_3=p\left\{\left(y_3-x_3\right)-\gamma_2\left(x_3-x_1\right)-\gamma_3\left(x_3-x_2\right)\right\} \\
& \dot{y}_3=R_3 x_3 -x_3 z_3 -k_3y_3\\
& \dot{z}_3=x_3 y_3-z_3-\eta_2\left(z_3-z_1\right)-\eta_3\left(z_3-z_2\right) .
\end{cases}
\end{equation}
In this work, we examine the controllability properties under various state-feedback configurations. Among these options, feedback through the $y$-state emerges as particularly advantageous from the stabilizability aspect, primarily due to its ability to ensure positivity of feedback gains. Furthermore, practical implementation of the y-state feedback is straightforward, as these states correlate directly with fluid temperature measurements. The main goal here is to establish explicit conditions on the feedback parameters $k_{1,2,3}$ to guarantee stabilization of system \eqref{Eqn2.1} at its equilibrium.

To begin our analysis, we consider the following symmetric matrix $A$, which explicitly depends on the feedback gains $k_{1,2,3}$, as given below:
\begin{equation}\label{mat1}
A=\left[\begin{smallmatrix}
R_1(1+\gamma_1+\gamma_2) & -R_1 & 0 & -(R_1+R_2)\gamma_1/2 & 0 & 0 & -(R_1+R_3)\gamma_2/2 & 0 & 0\\
-R_1 & k_1 & 0 & 0 & 0 & 0 & 0 & 0 & 0\\
0 & 0 & (1+\eta_1+\eta_2) & 0 & 0 & -\eta_1 & 0 & 0 & -\eta_2\\
-(R_1+R_2)\gamma_1/2 & 0 & 0 & R_2(1+\gamma_1+\gamma_3) & -R_2 & 0 & -(R_2+R_3)\gamma_3/2 & 0 & 0\\
0 & 0 & 0 & -R_2 & k_2 & 0 & 0 & 0 & 0\\
0 & 0 & -\eta_1 & 0 & 0 &  (1+\eta_1+\eta_3) & 0 & 0 & -\eta_3\\
-(R_1+R_3)\gamma_2/2 &  0 & 0 & -(R_2+R_3)\gamma_3/2 & 0 & 0 & R_3(1+\gamma_2 + \gamma_3) & -R_3 & 0\\
0 & 0 & 0 & 0 & 0 & 0 & -R_3 & k_3 & 0\\
0 & 0 & -\eta_2 & 0 & 0 & -\eta_3 & 0 & 0 & (1+\eta_2+\eta_3)
\end{smallmatrix}\right]
\end{equation}
This matrix plays a critical role in analyzing the stability of system \eqref{Eqn2.1}. It is noted that, within matrix \eqref{mat1}, the feedback gains $k_i$ represent the sole adjustable parameters, whereas the remaining system parameters remain fixed. Our objective is to ensure that the matrix $A$ is positive definite, which allows us to derive explicit conditions on the gains that render every principal minor of $A$ strictly positive. To accomplish this, we adopt specific balanced formulations involving pairs of Rayleigh numbers to facilitate the derivation of precise stability criteria.
\begin{equation}\label{Eqn2.2}
   \begin{cases}
      &\psi_1=R_1 R_2 \{(1 + \gamma_1+\gamma_2)(1 +\gamma_1+\gamma_3)-\gamma_1^2\} - (R_1 - R_2)^2 \gamma_1^2/4 >0\\
      &\psi_2=R_1 R_3 \{(1 + \gamma_1+\gamma_2)(1 +\gamma_2+\gamma_3)-\gamma_2^2\} - (R_1 - R_3)^2 \gamma_2^2/4>0\\
      &\psi_3=R_2 R_3 \{(1 + \gamma_1+\gamma_3)(1 +\gamma_2+\gamma_3)-\gamma_3^2\} - (R_2 - R_3)^2 \gamma_3^2/4>0.
    \end{cases}
\end{equation}
We assert the positivity of these expressions by leveraging the close numerical proximity of the Rayleigh numbers, combined with the condition $\gamma_i \ll 1$, a typical scenario arising in loop thermosyphon applications. We define $A_i$ as the \textit{$i$-th} principal minor of the matrix $A$. The following lemmas establish the positive definiteness of $A$.

\begin{lemma}\label{combined_lemma1&2}
The first three principal minors of matrix $A$ are positive if 
\begin{align}\label{Eqn2.3}
    k_1 > \frac{R_1}{1 + \gamma_1+\gamma_2}.
\end{align}
\begin{proof} The positivity of $A_1$ is immediate, as it directly follows from the definition
\begin{align*}
    A_1 = R_1(1 + \gamma_1+\gamma_2) > 0,
\end{align*}
given that $R_1 > 0$ and $\gamma_i's > 0$. For $A_2$, we compute the determinant explicitly
\begin{align*}
A_2 = \begin{vmatrix}
R_1(1 + \gamma_1+\gamma_2) & -R_1 \\
-R_1 & k_1
\end{vmatrix} = R_1 k_1 (1 + \gamma_1+\gamma_2) - R_1^2.
\end{align*}
Note that, requiring $A_2 > 0$ yields
\begin{align*}
    k_1 > \frac{R_1}{1 + \gamma_1+\gamma_2}.
\end{align*}
Similarly for $A_3$, we have
\begin{align*}
    A_3 = \begin{vmatrix}
R_1(1 + \gamma_1+\gamma_2) & -R_1 & 0 \\
-R_1 & k_1 & 0 \\
0 & 0 & (1 + \eta_1+\eta_2)
\end{vmatrix} = (1 + \eta_1+\eta_2) \left[ R_1 k_1(1+\gamma_1+\gamma_2) - R_1^2 \right] > 0,
\end{align*}
which imposes the same condition on $k_1$ we obtained from $A_2$ as the bound indicates that the thermal coupling parameters do not destabilize the control system.
\end{proof}
\end{lemma}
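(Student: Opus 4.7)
My plan is to simply chase the three leading principal minors down the diagonal of $A$, exploiting the sparsity in the top-left $3\times 3$ block to reduce everything to one scalar inequality.

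First I would note that $A_1$ is just the $(1,1)$ entry, namely $R_1(1+\gamma_1+\gamma_2)$, which is positive because every factor is assumed positive (Rayleigh number and momentum couplings). No condition on $k_1$ is required at this stage.

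Next, for $A_2$, I would expand the $2\times 2$ determinant along the first row to get
\[
A_2 = R_1 k_1(1+\gamma_1+\gamma_2) - R_1^2 = R_1\bigl[k_1(1+\gamma_1+\gamma_2)-R_1\bigr].
\]
Since $R_1>0$, positivity of $A_2$ is equivalent to $k_1(1+\gamma_1+\gamma_2)>R_1$, i.e.\ to the claimed bound $k_1>R_1/(1+\gamma_1+\gamma_2)$.

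Finally, for $A_3$, the key observation is structural: the third row and column of the leading $3\times 3$ submatrix are zero off the diagonal, because the $(1,3),(2,3),(3,1),(3,2)$ entries of $A$ all vanish. Cofactor expansion along the third row (or column) therefore factors the determinant as
\[
A_3 = (1+\eta_1+\eta_2)\cdot A_2.
\]
Since $\eta_1,\eta_2>0$, the prefactor is strictly positive, so $A_3>0$ under exactly the same condition that makes $A_2>0$, and in particular the thermal coupling parameters contribute no new constraint on the feedback gain.

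There is no real obstacle here; the whole proof is a direct computation. The only point worth emphasizing in the writeup is the block-diagonal structure of the leading $3\times 3$ block, which is what causes $A_3$ to inherit its sign from $A_2$ and explains why $\eta_1,\eta_2$ do not destabilize the system at this level.
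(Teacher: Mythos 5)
Your proposal is correct and follows essentially the same route as the paper: direct evaluation of $A_1$, expansion of the $2\times 2$ determinant for $A_2$ to obtain $k_1 > R_1/(1+\gamma_1+\gamma_2)$, and factoring $A_3=(1+\eta_1+\eta_2)A_2$ from the zero off-diagonal entries of the third row and column. Your explicit remark on the block structure, and why the $\eta_i$ impose no new constraint, matches the paper's closing observation exactly.
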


\begin{lemma}\label{lemma2.2}
The principal minor $A_4$ is positive if
\begin{equation} \label{Eqn2.4}
k_1 > \frac{R_1^2 R_2(1 + \gamma_1+\gamma_3)}{R_1 R_2(1 + \gamma_1+\gamma_2)(1 + \gamma_1+\gamma_3) - (R_1 + R_2)^2 \gamma_1^2/4}.
\end{equation}
\begin{proof} Consider the 4×4 principal submatrix of $A$, whose determinant is
\begin{align*}
    A_4 = \begin{vmatrix}
R_1(1 + \gamma_1+\gamma_2) & -R_1 & 0 & -(R_1 + R_2)\gamma_1/2 \\
-R_1 & k_1 & 0 & 0 \\
0 & 0 & (1 + \eta_1+\eta_2) & 0 \\
-(R_1 + R_2)\gamma_1/2 & 0 & 0 & R_2(1 + \gamma_1+\gamma_3)
\end{vmatrix}.
\end{align*}
This determinant simplifies to 
\begin{align*}
    A_4 = (1 + \eta_1+\eta_2)\left[\{R_1R_2 (1 + \gamma_1+\gamma_2)(1 + \gamma_1+\gamma_3)-(R_1+R_2)^2\gamma_1^2/4\}k_1 - R_1^2 R_2(1 + \gamma_1+\gamma_3) \right].
\end{align*}
By making $A_4 > 0$, one has \eqref{Eqn2.4} provided the denominator of \eqref{Eqn2.4} is positive. This is true because the denominator can be rewritten into $\psi_1$ in \eqref{Eqn2.2}.
\end{proof}
\end{lemma}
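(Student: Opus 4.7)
The plan is to exploit two structural features of the displayed $4\times4$ submatrix: the third row and column are decoupled from the other three (all off-diagonal entries in row/column $3$ vanish), and the remaining block is sparse enough that $k_1$ appears linearly after one further cofactor expansion. First I would expand $A_4$ along column $3$, factoring out $(1+\eta_1+\eta_2)>0$ and reducing the problem to positivity of the $3\times3$ principal minor on rows and columns $\{1,2,4\}$.

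Next I would cofactor expand that $3\times3$ determinant along row $2$, which has a single nonzero off-diagonal entry $-R_1$ and diagonal entry $k_1$. This produces an expression of the form $C\,k_1 - D$, where
\begin{equation*}
C = R_1R_2(1+\gamma_1+\gamma_2)(1+\gamma_1+\gamma_3) - (R_1+R_2)^2\gamma_1^2/4
\end{equation*}
and $D = R_1^2 R_2(1+\gamma_1+\gamma_3)>0$. Combined with the prefactor, $A_4 = (1+\eta_1+\eta_2)(C\,k_1-D)$. Since $(1+\eta_1+\eta_2)>0$, the inequality $A_4>0$ will be equivalent to $k_1>D/C$ as soon as $C$ is shown to be strictly positive; this yields precisely the bound \eqref{Eqn2.4}.

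The only substantive step is therefore verifying $C>0$, and this is where the balance condition \eqref{Eqn2.2} enters. Using the elementary identity $(R_1+R_2)^2 = (R_1-R_2)^2 + 4R_1R_2$, I would rewrite
\begin{equation*}
C = R_1R_2\bigl[(1+\gamma_1+\gamma_2)(1+\gamma_1+\gamma_3)-\gamma_1^2\bigr] - (R_1-R_2)^2\gamma_1^2/4,
\end{equation*}
and recognize the right-hand side as exactly $\psi_1$ in \eqref{Eqn2.2}. The standing assumption $\psi_1>0$ then gives $C>0$, so dividing across preserves the direction of the inequality.

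The main obstacle, such as it is, is spotting this algebraic identification of $C$ with $\psi_1$; without it one cannot certify that the denominator of \eqref{Eqn2.4} is positive, and the division step would be unjustified. The cofactor expansions themselves are routine, and the positivity assumption on $\psi_1$ (motivated by $\gamma_i\ll 1$ and near-equal Rayleigh numbers) ties the argument back to physically reasonable operating regimes for the thermosyphon.
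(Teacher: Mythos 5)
Your proposal is correct and follows essentially the same route as the paper: factor out $(1+\eta_1+\eta_2)$, reduce to the $3\times 3$ minor on rows/columns $\{1,2,4\}$, obtain $A_4=(1+\eta_1+\eta_2)(Ck_1-D)$, and justify $C>0$ by identifying $C$ with $\psi_1$ via $(R_1+R_2)^2=(R_1-R_2)^2+4R_1R_2$. You merely make explicit two steps the paper leaves implicit (the cofactor expansion and the algebraic identity showing the denominator equals $\psi_1$), which is a welcome clarification but not a different argument.
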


\begin{lemma}\label{lemma2.3}
The principal minors $A_5$ and $A_6$ are both positive if the feedback gain $k_2$ satisfies
\begin{equation} \label{Eqn2.5}
    k_2 > \frac{R_1 R_2^2 \{(1 + \gamma_1+\gamma_2) k_1 - R_1\}}{k_1 \psi_1 - R_1^2 R_2 (1 + \gamma_1+\gamma_3)},
\end{equation}
where $\psi_1$ is defined in \eqref{Eqn2.2}.   
\end{lemma}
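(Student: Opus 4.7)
The plan is to exploit the block structure of $A$: rows and columns $3, 6, 9$, which carry the $z$-state entries, couple only among themselves through the $\eta$ terms, while rows and columns $1,2,4,5,7,8$ (the $x$- and $y$-entries) couple only among themselves. Inside the leading $5\times 5$ and $6\times 6$ submatrices, the $z$-rows and $z$-columns that appear form an essentially decoupled block, so both $A_5$ and $A_6$ will factor as (a strictly positive $z$-determinant) times the single $4\times 4$ determinant $D$ on rows/columns $1,2,4,5$.

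Concretely, I would first observe that within the $5\times 5$ submatrix, row/column $3$ is decoupled from the rest (its only remaining off-diagonals $-\eta_1,-\eta_2$ sit in columns $6$ and $9$, which are absent), giving $A_5=(1+\eta_1+\eta_2)\,D$. For $A_6$, rows/columns $3$ and $6$ together form a self-contained $2\times 2$ symmetric block with diagonal entries $1+\eta_1+\eta_2$, $1+\eta_1+\eta_3$ and off-diagonal $-\eta_1$, yielding
\begin{equation*}
A_6=\bigl[(1+\eta_1+\eta_2)(1+\eta_1+\eta_3)-\eta_1^2\bigr]\,D.
\end{equation*}
Both prefactors are strictly positive for $\eta_i\in(0,1)$ by strict diagonal dominance, so positivity of $A_5$ and $A_6$ both reduce to the single inequality $D>0$.

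Next I would compute $D$ by expanding along its last column, whose only nonzero entries are $-R_2$ (at the $x_2$ row) and the diagonal $k_2$. This produces
\begin{equation*}
D = k_2\,M - R_1 R_2^2\bigl[(1+\gamma_1+\gamma_2)\,k_1 - R_1\bigr],
\end{equation*}
where $M$ is the $3\times 3$ minor on rows/columns $1,2,4$. A short cofactor expansion gives
\begin{equation*}
M = k_1\bigl[R_1 R_2(1+\gamma_1+\gamma_2)(1+\gamma_1+\gamma_3) - (R_1+R_2)^2\gamma_1^2/4\bigr] - R_1^2 R_2(1+\gamma_1+\gamma_3),
\end{equation*}
which coincides with $A_4/(1+\eta_1+\eta_2)$ computed in Lemma~\ref{lemma2.2}. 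Using the identity $R_1 R_2(1+\gamma_1+\gamma_2)(1+\gamma_1+\gamma_3)-(R_1+R_2)^2\gamma_1^2/4=\psi_1$, obtained by noting that $(R_1+R_2)^2/4=(R_1-R_2)^2/4+R_1 R_2$, I would rewrite $M=k_1\psi_1-R_1^2R_2(1+\gamma_1+\gamma_3)$. Under the hypothesis of Lemma~\ref{lemma2.2} one then has $M>0$, and under the bound $k_1>R_1/(1+\gamma_1+\gamma_2)$ from the earlier lemma one has $(1+\gamma_1+\gamma_2)k_1-R_1>0$. Solving the linear inequality $D>0$ for $k_2$ then yields \eqref{Eqn2.5} exactly.

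The hard part will not be the determinant expansion itself but the algebraic identification of the bracket in $M$ with $\psi_1$: the expression that drops out of the cofactor expansion is naturally written with $(R_1+R_2)^2$, whereas $\psi_1$ in \eqref{Eqn2.2} is stated with $R_1R_2\gamma_1^2$ and $(R_1-R_2)^2$. Verifying the equivalence is a short polynomial identity, but without it the stated threshold \eqref{Eqn2.5} cannot be recognized in the closed form given.
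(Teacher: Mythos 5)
Your proposal is correct and follows essentially the same route as the paper: the paper likewise reduces $A_6$ to $A_5$ through the positive $\eta$-factor (its ratio $\frac{1+2\eta_1+\eta_2+\eta_3+\eta_1\eta_2+\eta_1\eta_3+\eta_2\eta_3}{1+\eta_1+\eta_2}$ is exactly your factorization, since the numerator equals $(1+\eta_1+\eta_2)(1+\eta_1+\eta_3)-\eta_1^2$) and obtains \eqref{Eqn2.5} by solving $A_5>0$ with the denominator $k_1\psi_1-R_1^2R_2(1+\gamma_1+\gamma_3)$ positive by the earlier lemmas. Your version merely makes explicit the cofactor expansions and the identity $(R_1+R_2)^2/4=(R_1-R_2)^2/4+R_1R_2$ identifying the bracket with $\psi_1$, details the paper omits (and you correctly attribute the denominator's positivity to \eqref{Eqn2.4} rather than \eqref{Eqn2.3}).
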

\begin{proof}
    We start by evaluating the 5th-order principal minor
    \begin{align*}
        A_5 = \begin{vmatrix}  R_1(1 + \gamma_1+\gamma_2) & -R_1 & 0 & -(R_1 + R_2)\gamma_1/2 & 0 \\  -R_1 & k_1 & 0 & 0 & 0 \\  0 & 0 & (1 + \eta_1+\eta_2) & 0 & 0 \\  -(R_1 + R_2)\gamma_1/2 & 0 & 0 & R_2(1 + \gamma_1+\gamma_3) & -R_2 \\  0 & 0 & 0 & -R_2 & k_2  \end{vmatrix}.
    \end{align*}
The following inequality is obtained from $A_5 > 0$,
\begin{align*}
    k_2 > \frac{R_1 R_2^2 \{(1 + \gamma_1+\gamma_2) k_1 - R_1\}}{k_1 \psi_1 - R_1^2 R_2 (1 + \gamma_1+\gamma_3)},
\end{align*}
establishing the lower bound on $k_2$. The denominator is positive due to \eqref{Eqn2.3} and the positivity of $\psi_1$ in \eqref{Eqn2.2}. Now consider the 6th-order principal minor, we have
\begin{align*}
        A_6 &= \begin{vmatrix}  R_1(1 + \gamma_1+\gamma_2) & -R_1 & 0 & -(R_1 + R_2)\gamma_1/2 & 0 &0\\  -R_1 & k_1 & 0 & 0 & 0 &0\\  0 & 0 & (1 + \eta_1+\eta_2) & 0 & 0 & -\eta_1 \\  -(R_1 + R_2)\gamma_1/2 & 0 & 0 & R_2(1 + \gamma_1+\gamma_3) & -R_2 &0 \\  0 & 0 & 0 & -R_2 & k_2 & 0\\
        0 & 0 & -\eta_1 & 0 & 0 & (1+\eta_1+\eta_3)\end{vmatrix}.
    \end{align*}
This evaluates to
\begin{align*}
   A_6= \frac{(1+2\eta_1+\eta_2+\eta_3+\eta_1\eta_2+\eta_1\eta_3+\eta_2\eta_3)}{(1+\eta_1+\eta_2)}A_5>0,
\end{align*}
which produces the same lower bound on $k_2$ as that from $A_5>0$.
\end{proof} 

It is important to note that, while the principal minors $A_1$ through $A_6$ involve only a subset of the system parameters and feedback gains, the next three principal minors,$A_7$, $A_8$, and $A_9$, are of particular interest. These minors involve all system parameters, namely $R_i's, \gamma_i's,$ and $\eta_i's$, and hence play an important role in ensuring that matrix $A$ is positive definite.  We first introduced the following shorthand notations:
\begin{align*}
\gamma_{_{ij}} = 1 + \gamma_i + \gamma_j, \quad  
 R_{_{ij}} = R_i + R_j, \quad i,j \in \{1,2,3\}.
\end{align*}

\begin{lemma}\label{lemma2.4}
    The principal minor $A_7$ is positive if the feedback gains $k_1$ and $k_2$ satisfy
\begin{align}\label{Eqn2.6}
 k_1 > \frac{R_1^2 \left[R_2 R_3 \gamma_{_{13}} \gamma_{_{23}} - R^2_{23} \gamma_3^2/4 \right]}{%
\begin{array}{l}
R_3 \psi_1 \gamma_{_{23}} - R_{12}R_{23}R_{31} \gamma_{_1} \gamma_{_2} \gamma_{_3}/4 - R_2 \gamma_{_{13}} R_{31}^2 \gamma_2^2/4 - R_1 \gamma_{_{12}} R_{23}^2 \gamma_3^2/4 \end{array}
}
\end{align}
and
\begin{align} \label{Eqn2.7}
k_2 > \frac{R_2^2 \psi_2k_1-R_1^2R_2^2R_3 \gamma_{_{23}}}{%
\begin{array}{l}
R_3 \psi_1 k_1 \gamma_{_{23}} - R_{12}R_{23}R_{31} k_1 \gamma_{_1} \gamma_{_2} \gamma_{_3}/4-R_1^2\psi_3 - R_2 k_1 \gamma_{_{13}} R_{31}^2 \gamma_2^2/4- R_1k_1 \gamma_{_{12}}R_{23}^2 \gamma_3^2/4
\end{array}
}.
\end{align}
\begin{proof} We consider the $7^{th}$ order principal minor of matrix $A$
   \begin{align*}
       A_7 &= \begin{vmatrix}  R_1 \gamma_{_{12}} & -R_1 & 0 & -R_{12} \gamma_{_{1}}/2 & 0 &0 & -R_{13} \gamma_{_{2}}/2\\
       -R_1 & k_1 & 0 & 0 & 0 &0 & 0\\  0 & 0 & (1 + \eta_1+\eta_2) & 0 & 0 & -\eta_1 & 0 \\ 
       - R_{12} \gamma_{_{1}}/2 & 0 & 0 & R_2 \gamma_{_{13}} & -R_2 &0  & - R_{23} \gamma_{_{3}}/2\\
       0 & 0 & 0 & -R_2 & k_2 & 0 & 0\\
        0 & 0 & -\eta_1 & 0 & 0 & (1+\eta_1+\eta_3) & 0\\
        - R_{13} \gamma_{_{2}}/2& 0 & 0 & - R_{23} \gamma_{_{3}}/2 & 0 & 0 & R_3 \gamma_{_{23}}\end{vmatrix}.
    \end{align*}
Using the determinant expansion, and previous results from $A_6$, we obtain
\begin{align*}
A_7 &= h_1(\eta)\Big[ 
k_2\{R_3 \psi_1 k_1 \gamma_{_{23}} - R_{12}R_{23} R_{31} k_1 \gamma_1 \gamma_2 \gamma_3/4 - R_2 k_1  \gamma_{_{13}} R_{31}^2 \gamma_2^2/4- R_1 k_1 \gamma_{_{12}} R_{23}^2 \gamma_3^2/4- R_1^2 \psi_3\} \\
&- R_2^2 \psi_2 k_1 + R_1^2 R_2^2 R_3 \gamma_{_{23}}
\Big],
\end{align*}
where $h_1(\eta)=(1 + 2\eta_1 + \eta_2 + \eta_3 + \eta_1\eta_2 + \eta_1\eta_3 + \eta_2\eta_3)>0$. Then setting $A_7 > 0$ and solving for $k_2$ yields inequality \eqref{Eqn2.7}. For the inequality to be valid, the denominator of which must be positive, which leads to a new constraint on $k_1$ as given in \eqref{Eqn2.6}.
\end{proof}
\end{lemma}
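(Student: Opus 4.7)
The plan is to exploit the block-sparsity of $A$ before doing any determinant arithmetic. In the full $9\times 9$ matrix the $z$-state indices $\{3,6,9\}$ couple only to themselves (through the $\eta$'s) and are decoupled from the $xy$-indices $\{1,2,4,5,7,8\}$. Restricting to the first seven rows and columns, the remaining $z$-block is the $2\times 2$ matrix on indices $\{3,6\}$, whose determinant equals $h_1(\eta)=(1+\eta_1+\eta_2)(1+\eta_1+\eta_3)-\eta_1^2>0$. A permutation to block-diagonal form therefore gives $A_7 = h_1(\eta)\cdot D_7$, where $D_7$ is the $5\times 5$ principal minor on the indices $\{1,2,4,5,7\}$, reducing the problem to showing $D_7>0$.

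Within $D_7$, the gain $k_1$ appears only at the diagonal position $(2,2)$ and $k_2$ only at $(4,4)$, and the rows carrying them are sparse: the $k_1$-row contains only $-R_1$ and $k_1$, and similarly for the $k_2$-row. I would compute $D_7$ by first expanding along the $k_2$-row (reducing it to two $4\times 4$ minors linear in $k_2$) and then expanding each resulting $4\times 4$ along the $k_1$-row. What emerges is a bilinear expression
\[
D_7 \;=\; E_{11}\,k_1k_2 \;-\; R_2^{2}\psi_{2}\,k_1 \;-\; R_1^{2}\psi_{3}\,k_2 \;+\; R_1^{2}R_2^{2}R_3\,\gamma_{_{23}},
\]
where $E_{11}=\det(M_{\gamma})$ and $M_{\gamma}$ is the $3\times 3$ Rayleigh--$\gamma$ block with diagonal $(R_1\gamma_{_{12}},R_2\gamma_{_{13}},R_3\gamma_{_{23}})$ and off-diagonals $-R_{_{12}}\gamma_1/2,-R_{_{13}}\gamma_2/2,-R_{_{23}}\gamma_3/2$.

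The key algebraic observation is that the three $2\times 2$ principal minors of $M_{\gamma}$ are precisely $\psi_1,\psi_2,\psi_3$ from \eqref{Eqn2.2}; a short completion-of-squares identity gives $R_iR_j\gamma_{_{ij}}\gamma_{_{ji}}-R_{_{ij}}^{2}\gamma_{k}^{2}/4=\psi_{k}$, which explains the appearance of $\psi_{2}$ and $\psi_{3}$ as the coefficients of $k_{1}$ and $k_{2}$ in $D_7$. For $E_{11}=\det(M_{\gamma})$, Laplace expansion along its first row produces $R_3\psi_{1}\gamma_{_{23}}$ as the leading term, together with the three cross-terms in $\gamma_1\gamma_2\gamma_3$ and $\gamma_i^{2}$ that appear in the bracket of \eqref{Eqn2.6}.

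Combining these ingredients, $D_7>0$ reads $k_2\,(k_1E_{11}-R_1^{2}\psi_{3}) > k_1R_2^{2}\psi_{2}-R_1^{2}R_2^{2}R_3\gamma_{_{23}}$. To solve this as a one-sided lower bound on $k_2$, the coefficient $k_1E_{11}-R_1^{2}\psi_{3}$ must be positive, which is exactly the constraint \eqref{Eqn2.6} on $k_1$ after using $\psi_{3}=R_2R_3\gamma_{_{13}}\gamma_{_{23}}-R_{_{23}}^{2}\gamma_3^{2}/4$; dividing through then yields \eqref{Eqn2.7}. I expect the main obstacle to be the bookkeeping in the computation of $E_{11}$: the raw cofactor expansion of $M_{\gamma}$ produces six signed products of $R$'s and $\gamma$'s, and recognizing that one particular grouping equals $R_3\psi_{1}\gamma_{_{23}}$ (via the $\psi_1$ identity applied to the $(1,1)$-cofactor) is what gives the statement its clean form. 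Once that identification is made, the linearity of $D_7$ in $k_2$ and the sign constraint on its coefficient deliver both stated bounds without further work.
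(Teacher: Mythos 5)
Your proposal is correct and reaches exactly the paper's expression, but it organizes the determinant computation in a genuinely more structured way than the paper does. The paper obtains the factorization $A_7=h_1(\eta)\,[\,\cdots\,]$ by brute-force expansion ``using previous results from $A_6$''; you make the mechanism explicit: a simultaneous row--column permutation splits the $7\times 7$ leading submatrix into the decoupled $z$-block on indices $\{3,6\}$, whose determinant $(1+\eta_1+\eta_2)(1+\eta_1+\eta_3)-\eta_1^2$ expands to precisely the paper's $h_1(\eta)=1+2\eta_1+\eta_2+\eta_3+\eta_1\eta_2+\eta_1\eta_3+\eta_2\eta_3$, times the $5\times5$ minor $D_7$ on the $xy$-indices $\{1,2,4,5,7\}$. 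Your bilinear form
\begin{align*}
D_7=E_{11}k_1k_2-R_2^2\psi_2\,k_1-R_1^2\psi_3\,k_2+R_1^2R_2^2R_3\,\gamma_{_{23}},
\qquad E_{11}=\det M_{\gamma},
\end{align*}
coincides with the paper's bracket: using $\psi_1=R_1R_2\gamma_{_{12}}\gamma_{_{13}}-R_{12}^2\gamma_1^2/4$ one checks that $\det M_{\gamma}=R_3\psi_1\gamma_{_{23}}-R_{12}R_{23}R_{31}\gamma_1\gamma_2\gamma_3/4-R_2\gamma_{_{13}}R_{31}^2\gamma_2^2/4-R_1\gamma_{_{12}}R_{23}^2\gamma_3^2/4$, which is exactly the paper's coefficient of $k_1k_2$, and the coefficients $-R_1^2\psi_3$, $-R_2^2\psi_2$ and the constant term all match. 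Your identification of $\psi_1,\psi_2,\psi_3$ as the three $2\times2$ principal minors of the momentum-coupling matrix $M_{\gamma}$ is a nicer piece of bookkeeping than the paper's raw cofactor expansion: it explains structurally both why the $\eta$'s factor out (block-decoupling of the $z$-states) and why the $\psi$'s appear, and the same reduction transfers verbatim to the relation $A_8=k_3A_7-R_3^2A_6$ used in Lemma~\ref{lemma2.5}. Two minor points. First, your generic identity ``$R_iR_j\gamma_{_{ij}}\gamma_{_{ji}}-R_{ij}^2\gamma_k^2/4=\psi_k$'' is mis-indexed, since $\gamma_{_{ij}}=\gamma_{_{ji}}$ in the paper's notation; the correct instances (e.g.\ $\psi_3=R_2R_3\gamma_{_{13}}\gamma_{_{23}}-R_{23}^2\gamma_3^2/4$, the $2\times2$ minor of $M_\gamma$ on rows and columns $2,3$) you state and use correctly, so this is cosmetic. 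Second, when you rewrite the sign condition $k_1E_{11}>R_1^2\psi_3$ as the quotient bound \eqref{Eqn2.6}, you implicitly divide by $E_{11}$, i.e.\ assume $\det M_{\gamma}>0$; the paper makes the identical implicit assumption (its \eqref{Eqn2.6} has $E_{11}$ as denominator), justified in its standing regime of $\gamma_i\ll1$ and nearly equal Rayleigh numbers, cf.\ the positivity assertion for the $\psi_i$ in \eqref{Eqn2.2} --- but it is worth flagging explicitly in your write-up.
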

Having established the lower bounds on the first two feedback gains $k_1$ and $k_2$ that ensure positiveness of the principal minors up to $A_7$, we now turn our attention to the third and final feedback gain, $k_3$. The following lemma presents a lower bound on $k_3$, which  guarantees that the remaining principal minors are positive.

\begin{lemma}\label{lemma2.5}
The principal minors \(A_8\) and \(A_9\) are positive provided  \(k_3\) satisfies
\begin{equation}\label{Eqn2.8}
\resizebox{\linewidth}{!}{$
k_3 > \frac{R_3^2\!\left[\psi_1 k_1k_2 - R_1^2R_2 k_2 \gamma_{13}
- R_1R_2^2\!\left(\gamma_{12}k_1 - R_1\right)\right]}{%
k_1k_2 \Bigl[\psi_1 R_3 \gamma_{23}
- R_{12}R_{13}R_{23}\gamma_1\gamma_2\gamma_3/4
- R_1 \gamma_{12} R_{23}^2 \gamma_3^2/4
- R_2 \gamma_{13} R_{13}^2 \gamma_2^2/4 \Bigr]
- k_1 R_2^2 \psi_2 - k_2 R_1^2 \psi_3 - R_1^2 R_2^2 R_3 \gamma_{23}}
$}
\end{equation}
\begin{proof}
Expanding the $8^{th}$-order principal minor, we have
\begin{align*}
A_8&=\left|\begin{smallmatrix}
        R_1 \gamma_{_{12}} & -R_1 & 0 & -R_{12}\gamma_{_{1}}/2 & 0 & 0 & -R_{13}\gamma_{_{2}}/2 & 0\\
-R_1 & k_1 & 0 & 0 & 0 & 0 & 0 & 0\\
0 & 0 & (1+\eta_1+\eta_2) & 0 & 0 & -\eta_1 & 0 & 0\\
- R_{12}\gamma_{_{1}}/2 & 0 & 0 & R_2 \gamma_{_{13}} & -R_2 & 0 & -R_{23} \gamma_{_{3}}/2 & 0\\
0 & 0 & 0 & -R_2 & k_2 & 0 & 0 & 0\\
0 & 0 & -\eta_1 & 0 & 0 &  (1+\eta_1+\eta_3) & 0 & 0 \\
-  R_{13}\gamma_{_{2}}/2 &  0 & 0 & - R_{23}\gamma_{_{3}}/2 & 0 & 0 & R_3 \gamma_{_{23}} & -R_3\\
0 & 0 & 0 & 0 & 0 & 0 & -R_3 & k_3
\end{smallmatrix}\right|\\
&=k_3A_7-R_3^2A_6 >0,
\end{align*}
which yields $ k_3> \frac{R_3^2A_6}{A_7}.$ Using the expansions for \(A_6\) and \(A_7\), we have \eqref{Eqn2.8}. Notice that \(A_9>0\) follows immediately from \eqref{Eqn2.8} as well
\begin{align*}
A_9&=\left|\begin{smallmatrix}
        R_1 \gamma_{_{12}} & -R_1 & 0 & -R_{12}\gamma_{_{1}}/2 & 0 & 0 & -R_{13}\gamma_{_{2}}/2 & 0 & 0\\
-R_1 & k_1 & 0 & 0 & 0 & 0 & 0 & 0 & 0\\
0 & 0 & (1+\eta_1+\eta_2) & 0 & 0 & -\eta_1 & 0 & 0 & -\eta_2\\
- R_{12}\gamma_{_{1}}/2 & 0 & 0 & R_2 \gamma_{_{13}} & -R_2 & 0 & -R_{23} \gamma_{_{3}}/2 & 0 & 0\\
0 & 0 & 0 & -R_2 & k_2 & 0 & 0 & 0 & 0\\
0 & 0 & -\eta_1 & 0 & 0 &  (1+\eta_1+\eta_3) & 0 & 0 & -\eta_3\\
-  R_{13}\gamma_{_{2}}/2 &  0 & 0 & - R_{23}\gamma_{_{3}}/2 & 0 & 0 & R_3 \gamma_{_{23}} & -R_3 & 0\\
0 & 0 & 0 & 0 & 0 & 0 & -R_3 & k_3 & 0\\
0 & 0 & -\eta_2 & 0 & 0 & -\eta_3 & 0 & 0 & (1+\eta_2+\eta_3)
\end{smallmatrix}\right|\\
&= \frac{1+2(\eta_1+\eta_2+\eta_3)+3(\eta_1\eta_2+ \eta_1\eta_3 +\eta_2\eta_3)}{(1+\eta_1+\eta_2)(1+\eta_1+\eta_3)-\eta_1^2} A_8.
\end{align*}
Hence, the inequality \(A_9>0\) does not produce any new bound on the gains.
\end{proof}
\end{lemma}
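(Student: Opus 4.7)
The plan is to exploit the block/sparsity structure of $A$ so that both $A_8$ and $A_9$ can be reduced back to determinants already computed in Lemmas~\ref{combined_lemma1&2}--\ref{lemma2.4}. The last row of $A_8$ has only two nonzero entries, namely $-R_3$ in position $7$ and $k_3$ in position $8$. Expanding by cofactors along this row, the $(8,8)$-minor is exactly $A_7$, and the $(8,7)$-minor, after a further expansion along its last column (whose only nonzero entry is $-R_3$ in the bottom row), collapses to $A_6$. This yields the clean recursion
\begin{align*}
A_8 \;=\; k_3\, A_7 \;-\; R_3^2\, A_6.
\end{align*}
Since Lemma~\ref{lemma2.4} guarantees $A_7>0$ under \eqref{Eqn2.6}--\eqref{Eqn2.7}, enforcing $A_8>0$ is equivalent to $k_3>R_3^2 A_6/A_7$.

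From here, the derivation of the explicit bound \eqref{Eqn2.8} is purely algebraic: substitute the expressions for $A_6$ (from Lemma~\ref{lemma2.3}, which is $h_1(\eta)/(1+\eta_1+\eta_2)$ times a simple polynomial in $k_1,k_2$) and $A_7$ (from Lemma~\ref{lemma2.4}, which is $h_1(\eta)$ times the bracketed polynomial) into $k_3 > R_3^2 A_6/A_7$. The factor $h_1(\eta)$ and the common $(1+\eta_1+\eta_2)$ piece cancel in the ratio, leaving only $R_i$, $\gamma_i$, $k_1$, $k_2$ in the numerator and denominator. Grouping the numerator as $R_3^2\!\left[\psi_1 k_1 k_2 - R_1^2 R_2 k_2 \gamma_{13} - R_1 R_2^2(\gamma_{12}k_1-R_1)\right]$ and the denominator as $k_1 k_2$ times the $A_7$-bracket minus the remaining terms from $A_6$, one recovers \eqref{Eqn2.8} exactly. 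Positivity of the denominator is inherited from Lemma~\ref{lemma2.4}.

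For $A_9$, the strategy is to isolate the contribution of the extra $z_3$-row/column. Because $z_3$ couples only to $z_1, z_2$ (through $-\eta_2$ and $-\eta_3$) and to itself, the $9\times9$ matrix decomposes into the $(x,y)$-block—which is unaffected—and an enlarged $z$-block of size $3\times3$ sitting in positions $(3,6,9)$. Applying a Schur-complement (block-permutation) argument, or equivalently expanding cofactors along row/column $9$, one shows that $A_9$ equals $A_8$ multiplied by the ratio of the $3\times3$ $z$-block determinant to the $2\times2$ $z$-block determinant already embedded in $A_8$, which is precisely the factor
\begin{align*}
\frac{1+2(\eta_1+\eta_2+\eta_3)+3(\eta_1\eta_2+\eta_1\eta_3+\eta_2\eta_3)}{(1+\eta_1+\eta_2)(1+\eta_1+\eta_3)-\eta_1^2}.
\end{align*}
Since both numerator and denominator are manifestly positive sums of nonnegative monomials in $\eta_i\in(0,1)$, positivity of $A_9$ is immediate from that of $A_8$, and no new constraint on $k_3$ is produced.

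The main obstacle I expect is not the conceptual expansion but the bookkeeping needed to simplify $R_3^2 A_6/A_7$ into the exact form \eqref{Eqn2.8}; one has to verify that the common $h_1(\eta)$ factor actually cancels (it does, because $A_6$ carries $h_1(\eta)/(1+\eta_1+\eta_2)$ while $A_7$ carries $h_1(\eta)$ and the Schur step absorbs the leftover $(1+\eta_1+\eta_2)$), and that the cross-terms $-R_{12}R_{23}R_{31}\gamma_1\gamma_2\gamma_3/4$, $-R_2\gamma_{13}R_{31}^2\gamma_2^2/4$, $-R_1\gamma_{12}R_{23}^2\gamma_3^2/4$ combine correctly with $R_3\psi_1\gamma_{23}$ rather than yielding spurious terms. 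For the $A_9$ step, the care lies in confirming that the $(x,y)$-columns contribute no extra minors when $z_3$ is added, which follows from the zero pattern of the last column in rows $1,2,4,5,7,8$.
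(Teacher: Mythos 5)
Your proposal is correct and takes essentially the same route as the paper: cofactor expansion along the last row gives the recursion $A_8=k_3A_7-R_3^2A_6$, substituting the expansions of $A_6$ and $A_7$ (with the common $\eta$-dependent factors canceling) yields \eqref{Eqn2.8}, and $A_9$ equals $A_8$ times the manifestly positive ratio of the $3\times3$ and $2\times2$ $z$-block determinants, so no new bound arises; your block-permutation justification of that last factor is just a slightly more explicit version of what the paper asserts directly. One cosmetic slip: the entire denominator of \eqref{Eqn2.8} comes from the $A_7$ expansion (it is $A_7/h_1(\eta)$), not partly from $A_6$, which contributes only to the numerator.
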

Now, we are ready to present the main result on the stability of \eqref{Eqn2.1} in the following theorem.

\begin{theorem}\label{thm2.1} There exists lower bound on the state feedback gains $k_{1,2,3}$ so that the triangularly coupled loop thermosyphon system \eqref{Eqn2.1} with the single-state feedback $u_i=-k_iy_i$, $1\leq i\leq 3$, is globally asymptotically stable at the origin.
\begin{proof}
Consider the Lyapunov function 
\[
V(x)=\tfrac{1}{2}\sum_{i=1}^3 \Big(\tfrac{R_i}{p}x_i^2+y_i^2+z_i^2\Big),
\]
which is positive definite and radially unbounded. The time derivative of $V$ along the state trajectories can be written as
\begin{align*}
    \dot V(x)= -\,x^\top Ax,
\end{align*}
where $A$ is the symmetric matrix defined in (\ref{mat1}). We define
\begin{align}\label{EqnK_i's}
\begin{cases}
    &\mathcal{K}_{1} := \textit{ maximum lower bound between }\eqref{Eqn2.4} \textit{ and }  \eqref{Eqn2.6}\\
    &\mathcal{K}_{2} := \textit{ maximum lower bound between }\eqref{Eqn2.5} \textit{ and }  \eqref{Eqn2.7}.
 \end{cases}   
\end{align}
Then, if $k_1 > \mathcal{K}_{1}$ and $k_2 > \mathcal{K}_{2}$, all leading principal minors of $A$ up to $A_7$ are positive according to Lemma \ref{combined_lemma1&2} to Lemma \ref{lemma2.4}. Lastly, if $k_3 \text{ satisfies } (\ref{Eqn2.8})$, principal minors $A_8$ and $A_9$ are also positive based on Lemma \ref{lemma2.5}. Therefore, there exists lower bounds on $k_{1,2,3}$ so that all the principal minors of $A$ are positive. Then, by Sylvester’s criterion, $A$ is positive definite, and thus $\dot V<0$.
\end{proof}
\end{theorem}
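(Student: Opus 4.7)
The plan is to use Lyapunov's direct method with a quadratic Lyapunov function weighted by the Rayleigh numbers, mirroring the classical Lorenz analysis. I would take
\[
V = \tfrac{1}{2}\sum_{i=1}^{3}\Big(\tfrac{R_i}{p}\,x_i^{2} + y_i^{2} + z_i^{2}\Big),
\]
which is positive definite and radially unbounded. The weight $R_i/p$ on $x_i^{2}$ is chosen precisely so that, on differentiating along trajectories of \eqref{Eqn2.1}, the bilinear cross-terms $-x_i y_i z_i$ arising from $\dot y_i$ and $+x_i y_i z_i$ arising from $\dot z_i$ cancel, while the two copies of $R_i x_i y_i$ (one from $\dot x_i$ and one from $\dot y_i$) combine into a single symmetric contribution. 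As a consequence, $\dot V$ collapses to a pure quadratic form in the nine states.

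The next step is to read off this quadratic form monomial by monomial and match it against $-\xi^{\top} A \xi$ with $\xi=(x_1,y_1,z_1,x_2,y_2,z_2,x_3,y_3,z_3)^{\top}$ and $A$ the symmetric matrix in \eqref{mat1}. The diagonal $x_i^{2}$ coefficient aggregates the self-damping with the two momentum self-couplings of loop $i$, giving $R_i(1+\gamma_j+\gamma_k)$; each $x_i x_j$ cross-coefficient collects contributions from both coupled $\dot x$-equations and assembles into $(R_i+R_j)\gamma_{ij}$; the $y_i^{2}$ coefficient is $2k_i$ (so the matrix diagonal is $k_i$); and the $z$-block comes straight from the thermal-coupling structure. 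With this algebraic identification in hand, global asymptotic stability of the origin reduces to positive definiteness of $A$.

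At that point I would invoke Sylvester's criterion and import the work already done in the lemmas. Defining $\mathcal{K}_1$ as the larger of the $k_1$-bounds in \eqref{Eqn2.4} and \eqref{Eqn2.6}, and $\mathcal{K}_2$ as the larger of the $k_2$-bounds in \eqref{Eqn2.5} and \eqref{Eqn2.7}, Lemmas 2.1--2.4 deliver $A_1,\dots,A_7>0$ whenever $k_1>\mathcal{K}_1$ and $k_2>\mathcal{K}_2$, and Lemma \ref{lemma2.5} then yields $A_8,A_9>0$ under \eqref{Eqn2.8}. Hence $A$ is positive definite, so $\dot V \le -\lambda_{\min}(A)\|\xi\|^{2}<0$ off the origin, and global asymptotic stability follows from the standard Lyapunov theorem for radially unbounded $V$.

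The subtle point I expect to be the main obstacle is not the final assembly but ensuring that the chain of bounds is actually compatible: each denominator appearing in \eqref{Eqn2.4}--\eqref{Eqn2.8} must be strictly positive so the bounds are finite and the three gain inequalities can be simultaneously satisfied. That is precisely the purpose of the auxiliary positivity requirements $\psi_1,\psi_2,\psi_3>0$ in \eqref{Eqn2.2}, which the paper justifies from the physical scale of the problem (small momentum coupling and comparable Rayleigh numbers). Once those feasibility checks are in place, stitching the theorem together from the lemmas is essentially a routine application of Sylvester's criterion.
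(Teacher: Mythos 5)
Your proposal is correct and takes essentially the same route as the paper: the identical Rayleigh-weighted Lyapunov function, the identity $\dot V=-x^\top Ax$ with the matrix in \eqref{mat1}, the definitions of $\mathcal{K}_1$ and $\mathcal{K}_2$ as the maxima of the bounds in \eqref{Eqn2.4}/\eqref{Eqn2.6} and \eqref{Eqn2.5}/\eqref{Eqn2.7}, and positive definiteness via Sylvester's criterion using Lemma \ref{combined_lemma1&2} through Lemma \ref{lemma2.5}. One inconsequential slip: the $y_i^2$ coefficient in $\dot V$ is $k_i$ (matching the diagonal of $A$ directly, since the $\tfrac12$ in $V$ cancels on differentiation), not $2k_i$; your feasibility remark about $\psi_1,\psi_2,\psi_3>0$ is sound and is exactly where the paper places that burden, in \eqref{Eqn2.2} and the lemma denominators rather than in the theorem's proof itself.
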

Theorem \ref{thm2.1} highlights an important physical insight: the stability conditions for the coupled thermosyphon system depend only on the Rayleigh numbers $R_i$ and the momentum coupling parameter $\gamma_i$, while remaining unaffected by the thermal coupling parameter $\eta_i$. In practical terms, this indicates that the degree of thermal coupling does not have any bearing on the stability of the triangularly coupled loop system \eqref{Eqn2.1}.

From \eqref{Eqn2.6}, we observe that the lower bound for $k_1$ depends explicitly on all the Rayleigh numbers $R_i's$, the momentum coupling parameters $\gamma_i's$, and the auxiliary quantities $\psi_i's$ defined in \eqref{Eqn2.2}. Interestingly, the geometric symmetry of the triple-loop system also creates an algebraic symmetry in the lower bound in \eqref{Eqn2.6}: if we fix the first loop and interchange the indices of the remaining loops, i.e., swap $R_2 \leftrightarrow R_3$ and $\gamma_1 \leftrightarrow \gamma_2$, the lower bound is invariant. This invariance arises from the symmetric forms of $\gamma_{ij}=1+\gamma_i+\gamma_j$ and $R_{ij}=R_i+R_j$, which preserve the algebraic form of both the numerator and denominator in \eqref{Eqn2.6}. Consequently, the constraint on $k_1$ is unaffected by interchanging the second and third loops. Such a symmetry in the stability bounds does not exist in those bounds derived from the two-loop system in \cite{ref14}. This is because the two-loop system does not have the closed-loop topological structure as in this work. The bounds obtained from the triangularly coupled triple loop system are also much more complicated than its two-loop counterpart due to differences in the coupling mechanism and dimensionality.

In essence, Theorem \ref{thm2.1} established the controllability of \eqref{Eqn2.1} via local state feedback that is incorporated into the $y$-equations of \eqref{Eqn2.1}. The numerical simulations also verified the convergence of the states toward the origin. Since the simulation results are quite close to those from the case of adaptive control, which will be discussed later, we instead present some numerical results on the tracking ability of the states with respect to non-stationary reference signals. In this simulation, we specify the following key parameter values as the Rayleigh numbers $R_1=52, R_2=50, R_3=56,$ the momentum coupling parameters $\gamma_1=0.4, \gamma_2=0.5, \gamma_3=0.7$ and the thermal coupling parameters $\eta_1=0.2, \eta_2=0.5, \eta_3=0.2$. The feedback gain values are $k_1=50$, $k_2=48$, and $k_3=52$. In the experiment setup, we make the $y_2$-state track a reference signal in the form of a mixed sinusoids, $i.e.$  $y_{r_2}=15\sin(2t)+12\cos(3t)$ along with $y_{r_1}=y_{r_3}=0$ in \eqref{Eqn1.1}. It is shown in Figure \ref{fig_tr} that $y_2$ quickly captures the waveform of the reference signal and stays close in tracking the input signal. The state trajectories of $y_1$ and $y_3$ in their steady state also exhibit some characteristics of a sinusoid due to the thermal and momentum coupling terms in the equations reacting to the steady state response of $y_2$. This in turn demonstrates the coupling effect on the dynamics of the neighboring loops.

\begin{figure}[H]
\centering
\includegraphics[width=8.0cm]{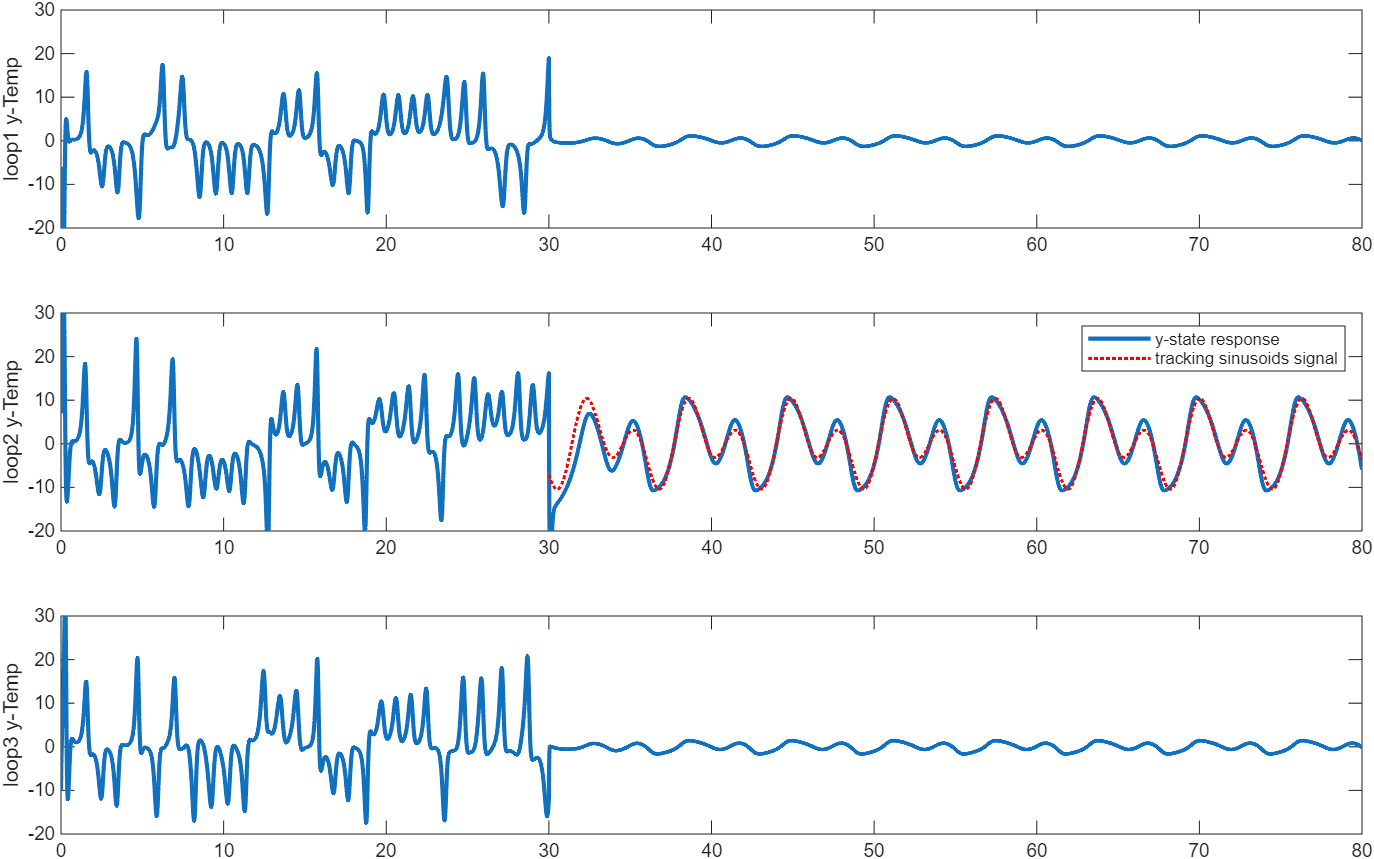}
\caption{Tracking error of the $y_2$-state with respect to a sinusoid reference signal\label{fig_tr}}
\end{figure}

The stability bounds on the feedback gains established so far rely on the assumption that the system parameters $\{R_i,\gamma_i\}_{i=1}^3$ are known a \textit{priori}. In practice, however, these parameters are usually unavailable. To this end, we turn to a dynamic gain search scheme, in which the feedback gains evolve dynamically rather than being fixed. More specifically, each $k_i$ is treated as an extended state governed by its own dynamic equation. In this design, the gains monotonically increase (from zero) when the system is unstable till the gains surpass their respective stability bounds.  As a result of Theorem \ref{thm2.1}, the  trajectories converge asymptotically to the origin.  Based on this idea, we propose the following decentralized adaptive control system for the triangularly coupled loop thermosyphon when the system parameters are unknown:

\begin{equation}\label{Eqn2.9}
\begin{cases}
& \dot{x}_1=p\left\{\left(y_1-x_1\right)-\gamma_1\left(x_1-x_2\right)-\gamma_2\left(x_1-x_3\right)\right\} \\
& \dot{y}_1=R_1 x_1 -x_1 z_1 -k_1y_1\\
& \dot{z}_1=x_1 y_1-z_1-\eta_1\left(z_1-z_2\right)-\eta_2\left(z_1-z_3\right) \\
& \dot{x}_2=p\left\{\left(y_2-x_2\right)-\gamma_1\left(x_2-x_1\right)-\gamma_3\left(x_2-x_3\right)\right\} \\
& \dot{y}_2=R_2 x_2 -x_2 z_2 -k_2y_2\\
& \dot{z}_2=x_2 y_2-z_2-\eta_1\left(z_2-z_1\right)-\eta_3\left(z_2-z_3\right) \\
& \dot{x}_3=p\left\{\left(y_3-x_3\right)-\gamma_2\left(x_3-x_1\right)-\gamma_3\left(x_3-x_2\right)\right\} \\
& \dot{y}_3=R_3 x_3 -x_3 z_3 -k_3y_3\\
& \dot{z}_3=x_3 y_3-z_3-\eta_2\left(z_3-z_1\right)-\eta_3\left(z_3-z_2\right)\\
& \dot{k}_i=\alpha_iy_i^2, \hspace{4cm} \ i = 1,2,3,
\end{cases}
\end{equation}
where $\alpha_i$ is the learning rate constant. Note that, the closed-loop system \eqref{Eqn2.9} evolves in the extended space space $\mathbb{R}^{9} \times \mathbb{R}_{+}^3$, where $\mathbb{R}_{+}^3$ accounts for the positive adaptive gains $k_i$.

It follows from the system equations \eqref{Eqn2.9} that the evolution of $k_i$ is strictly monotonically increasing whenever $y_i \neq 0$. Once the adaptive gains exceed the thresholds specified in Theorem \ref{thm2.1}, the trajectories are driven to the origin asymptotically. In other words, as $y_i \to 0$, each gain $k_i$ converges to a steady state $k_i^*$ that satisfies the lower bound conditions in Theorem \ref{thm2.1}. A natural initialization is $k_i(0)=0$. To simplify the stability analysis, we introduce change of variables in $k_i = \hat{k}_i + k_i^*$. Substituting this expression into \eqref{Eqn2.9}, we obtain an equivalent system
\begin{equation}\label{Eqn2.10}
\begin{cases}
& \dot{x}_1=p\left\{\left(y_1-x_1\right)-\gamma_1\left(x_1-x_2\right)-\gamma_2\left(x_1-x_3\right)\right\} \\
& \dot{y}_1=R_1 x_1 -x_1 z_1 -\hat{k}_1y_1 -{k}^{*}_1y_1\\
& \dot{z}_1=x_1 y_1-z_1-\eta_1\left(z_1-z_2\right)-\eta_2\left(z_1-z_3\right) \\
& \dot{x}_2=p\left\{\left(y_2-x_2\right)-\gamma_1\left(x_2-x_1\right)-\gamma_3\left(x_2-x_3\right)\right\} \\
& \dot{y}_2=R_2 x_2 -x_2 z_2 -\hat{k}_2 y_2 -{k}^{*}_2y_2\\
& \dot{z}_2=x_2 y_2-z_2-\eta_1\left(z_2-z_1\right)-\eta_3\left(z_2-z_3\right) \\
& \dot{x}_3=p\left\{\left(y_3-x_3\right)-\gamma_2\left(x_3-x_1\right)-\gamma_3\left(x_3-x_2\right)\right\} \\
& \dot{y}_3=R_3 x_3 -x_3 z_3 -\hat{k}_3y_3 -{k}^{*}_3y_3\\
& \dot{z}_3=x_3 y_3-z_3-\eta_2\left(z_3-z_1\right)-\eta_3\left(z_3-z_2\right)\\
& \hat{\dot{k}}_i=\alpha_iy_i^2, \hspace{4.5cm} i = 1,2,3.
\end{cases}
\end{equation}
Hence, this transformation places the origin of the extended state space as the equilibrium of \eqref{Eqn2.10}. We state the following theorem concerning the stability of the adaptive control system \eqref{Eqn2.10}.

\begin{theorem}\label{thm2.2} The augmented triangularly coupled closed-loop system \eqref{Eqn2.10}, in which $k_1^*> \mathcal K_1,\quad k_2^*> \mathcal K_2,$ and $k_3^*$ satisfies \eqref{Eqn2.8}, with the adaptive law $\dot{\hat{k}}_i=\alpha_i y_i^2$, $1\le i\le 3$, is globally asymptotically stable at the origin.
\begin{proof}
We use the Lyapunov function
\begin{align*}
    V(x,\hat{k}) = \frac{1}{2}\sum_{i=1}^3 \left( \frac{R_i}{p}x_i^2 + y_i^2 + z_i^2 + \frac{1}{\alpha_i}\hat{k}_i^2 \right)>0.
\end{align*}
The time derivative of $V$ along the state trajectories of \eqref{Eqn2.10} is given by
\begin{align*}
    \dot{V} &= \sum_{i=1}^3 \Big( \tfrac{R_i}{p}x_i \dot{x}_i + y_i \dot{y}_i + z_i \dot{z}_i + \tfrac{1}{\alpha_i}\hat{k}_i \dot{\hat{k}}_i \Big) \\
           &= -\,x^\top A(k^*)x,
\end{align*}
where $k^*=[k_1^*,\quad k_2^*, \quad k_3^*]^\top$ and $A(k^*)$ is the matrix in \eqref{mat1} evaluated at $k^*$. Since $k_1^* > \mathcal K_1$, $k_2^*> \mathcal K_2$, and $k_3^*$ satisfies \eqref{Eqn2.8}, $A(k^*)$ is positive definite. It follows that $\dot V<0$. Since $V$ is radially unbounded, system \eqref{Eqn2.10} is globally asymptotically stable at the origin.
\end{proof}
\end{theorem}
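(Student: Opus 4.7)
The plan is to extend the Lyapunov analysis of Theorem \ref{thm2.1} by augmenting the candidate with a term in the parameter estimation error $\hat k_i$, which is the standard device in adaptive control. The natural ansatz is
\[
V(x,\hat k)=\frac{1}{2}\sum_{i=1}^{3}\Big(\frac{R_i}{p}x_i^2+y_i^2+z_i^2+\frac{1}{\alpha_i}\hat k_i^2\Big),
\]
which is positive definite and radially unbounded on the extended state space $\mathbb{R}^{9}\times\mathbb{R}^{3}$. The extra summand $\tfrac{1}{2\alpha_i}\hat k_i^2$ is engineered so that, under the adaptive law $\dot{\hat k}_i=\alpha_i y_i^2$, its time derivative produces exactly the contribution needed to cancel the unknown quantity $-\hat k_i y_i^2$ generated inside $y_i\dot y_i$.

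Next I would compute $\dot V$ along trajectories of \eqref{Eqn2.10}. Block by block, $y_i\dot y_i=R_ix_iy_i-x_iy_iz_i-(\hat k_i+k_i^*)y_i^2$, while $\tfrac{1}{\alpha_i}\hat k_i\,\dot{\hat k}_i=\hat k_i y_i^2$ eliminates the $-\hat k_i y_i^2$ contribution. The remaining nine-state expression coincides with the one computed in the proof of Theorem \ref{thm2.1}, now with the constant feedback gains replaced by the target values $k_i^*$. Hence $\dot V=-x^{\top}A(k^*)x$, where $x$ denotes the nine-dimensional physical state ordered as in \eqref{mat1}, and $A(k^*)$ is the symmetric matrix of \eqref{mat1} evaluated at $k^*$.

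By hypothesis $k_1^*>\mathcal K_1$, $k_2^*>\mathcal K_2$, and $k_3^*$ satisfies \eqref{Eqn2.8}, so Theorem \ref{thm2.1} (via Lemmas \ref{combined_lemma1&2}--\ref{lemma2.5} and Sylvester's criterion) yields $A(k^*)$ positive definite, giving $\dot V\le 0$ with equality only when the nine-dimensional physical state vanishes. The subtle point is that $\dot V$ is only negative \emph{semi}-definite on the extended space, because $\hat k_i$ does not appear in the quadratic form; Lyapunov's direct method alone then supplies stability and boundedness but not attractivity of the gain coordinates. To close this gap I would invoke LaSalle's invariance principle: on $\{\dot V=0\}=\{x_i=y_i=z_i=0\}$, the adaptation law forces $\dot{\hat k}_i=0$, so the largest invariant subset is $\{(0,0,0,\hat k_1,\hat k_2,\hat k_3)\}$. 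Radial unboundedness of $V$ then promotes the conclusion globally for the physical coordinates, and each $k_i=\hat k_i+k_i^*$ settles at a finite steady-state value.

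The hard part, I expect, is a matter of interpretation rather than calculation: the theorem announces global asymptotic stability of the origin in $\mathbb{R}^{9}\times\mathbb{R}^{3}$, whereas the Lyapunov-plus-LaSalle argument only establishes this for the physical variables, with the adaptive gains converging to finite (possibly nonzero) limits. I would therefore frame the result either in LaSalle terms, interpreting the convergence of the gains to constants as the customary adaptive-control guarantee, or restate the conclusion as asymptotic regulation of $(x_i,y_i,z_i)$ together with boundedness of the gains, which is the standard deliverable of this design paradigm.
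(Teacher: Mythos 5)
Your proposal uses the same Lyapunov function and the same cancellation $\tfrac{1}{\alpha_i}\hat k_i\dot{\hat k}_i=\hat k_i y_i^2$ as the paper, arriving at the identical identity $\dot V=-x^{\top}A(k^*)x$ with $A(k^*)$ positive definite under the stated gain conditions; up to that point the two arguments coincide. Where you diverge is in the final step, and your version is the more careful one. The paper simply asserts $\dot V<0$ and concludes global asymptotic stability at the origin of the extended space $\mathbb{R}^{9}\times\mathbb{R}^{3}$, but as you observe, $\dot V$ is only negative \emph{semi}definite there: it vanishes on the entire subspace $\{x=0\}$ irrespective of $\hat k$. In fact every point $(0,\hat k)$ is an equilibrium of \eqref{Eqn2.10}, so the extended-space origin is not even an isolated equilibrium and cannot be asymptotically stable in the literal sense claimed; the paper's proof overstates what its own computation delivers. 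Your LaSalle patch is exactly the standard repair: $V$ nonincreasing and radially unbounded gives boundedness of all trajectories, the largest invariant set inside $\{\dot V=0\}$ is $\{x=0\}$ with $\hat k$ constant (since $\dot{\hat k}_i=\alpha_i y_i^2=0$ there), hence $x(t)\to 0$ globally while each $k_i=\hat k_i+k_i^*$, being monotone and bounded, converges to a finite limit. This matches the qualitative discussion the paper offers \emph{after} the theorem (monotone gains converging to steady values) but never incorporates into the proof itself. One refinement worth stating explicitly in your write-up: the limiting gain need not equal $k_i^*$, since $k_i^*$ is merely an analysis constant chosen above the stability thresholds, so the honest deliverable is precisely your restated conclusion --- asymptotic regulation of $(x_i,y_i,z_i)$ together with convergence of the gains to finite, generically nonzero values --- rather than global asymptotic stability of the origin in the augmented coordinates.
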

We believe that the convergence of the adaptive gain is guaranteed by Theorem \ref{thm2.1}, where it is established that there exists lower bounds on the feedback gains so that the system is stabilized at the origin when the gains are greater than their respective bounds. The dynamic equations of the adaptive gains also guarantee that the gains are monotonically increasing whenever the $y$-state is nonzero. As soon as the bounds surpass the threshold values, the state trajectories asymptotically approach the origin. According to the governing equation of the gain \eqref{Eqn2.10}, the time derivative of the gain approaches zero. As a result, the gain converges to a set value. The range of the learning rate closely relates to the above analysis. The learning rate parameter appears in the Lyapunov function. However, it does not play a role in the stability analysis as it is being canceled from the time derivative of the Lyapunov function along the state trajectory. Yet, it indeed controls the transient response because the parameter dictates the rate of change of the adaptive gain. We will demonstrate this through the numerical simulations as follows.

The adaptive gain search via the dynamic equations on the feedback gains is of interest in practice. This is because, with unknown system parameters, the lower bounds on the gains cannot be computed beforehand. We observe the performance of such an adaptive controller design through numerical simulations. In the following example, the key system parameter values are $R_1=35, R_2=45, R_3=38,$ the momentum coupling parameters $\gamma_1=0.1, \gamma_2=0.3, \gamma_3=0.2$ and the thermal coupling parameters $\eta_1=0.1, \eta_2=0.1, \eta_3=0.2$. For the simulation results shown in  Figures \ref{fig2.1}, \ref{fig2.2}, and \ref{fig2.3}, the initial state values are given in $[-8, -6, 5, 3, 7, 11, 10, -10, 2]$; the initial value of each gain is set at $0$. As shown in Figure \ref{fig2.1}(b) and \ref{fig2.2}(b), the gains monotonically increase and asymptotically approach their threshold values to stabilize the system, which is evidenced by the system's time response of fluid velocity shown in Figure \ref{fig2.1}(a) and \ref{fig2.2}(a), as well as the time response of the temperature state in $y$ in Figure \ref{fig2.3}(a) and the temperature state in $z$ in \ref{fig2.3}(b). The learning rate for Figure \ref{fig2.1} is $0.8$, whereas it is set at $2.5$ for Figure \ref{fig2.2} and \ref{fig2.3}. As a result, the transient response shown in Figure \ref{fig2.2} and \ref{fig2.3} is much shorter than that of Figure \ref{fig2.1}, which agrees with the analysis above. The steady state errors for all 9 states of the numerical simulation of \eqref{Eqn2.10} are shown in Table \ref{tab:steaderr1}. Those data are collected 30 seconds after the activation of the adaptive controllers. The asymptotic convergence of the states to the equilibrium is evident from the numerical values. This result also agrees with the qualitative analysis presented above.
\begin{figure}[H]
\centering
  \centering
\includegraphics[scale=0.48]{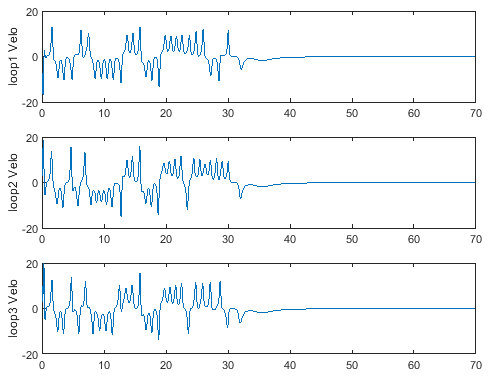}
  \centering
\includegraphics[scale=0.48]{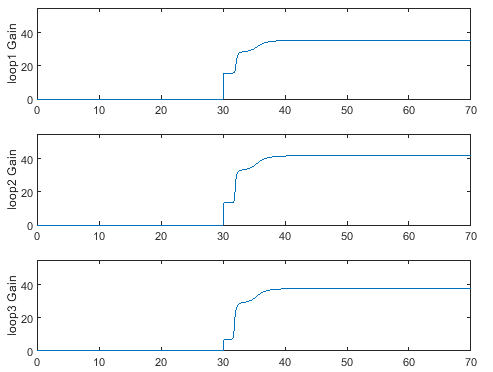}
\caption{(\textbf{a}) Stabilized fluid flow in each loop with controller activated at $t=30s;$ (\textbf{b}) dynamic gain search with a learning rate at $0.8$.\label{fig2.1}}
\end{figure}

\begin{figure}[H]
\centering
  \centering
\includegraphics[scale=0.48]{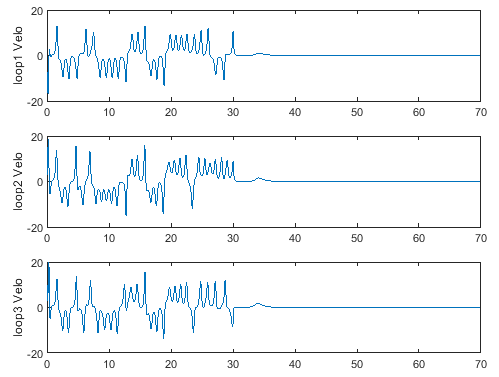}
  \centering
\includegraphics[scale=0.48]{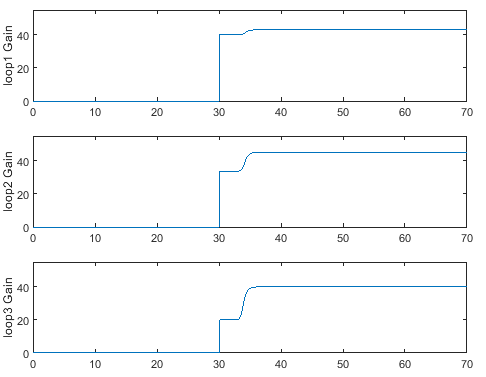}
\caption{(\textbf{a}) Stabilized fluid flow in each loop with controller activated at $t=30s;$ (\textbf{b}) dynamic gain search with a learning rate at $2.5$.\label{fig2.2}}
\end{figure}

\begin{figure}[H]
\centering
  \centering
\includegraphics[scale=0.27]{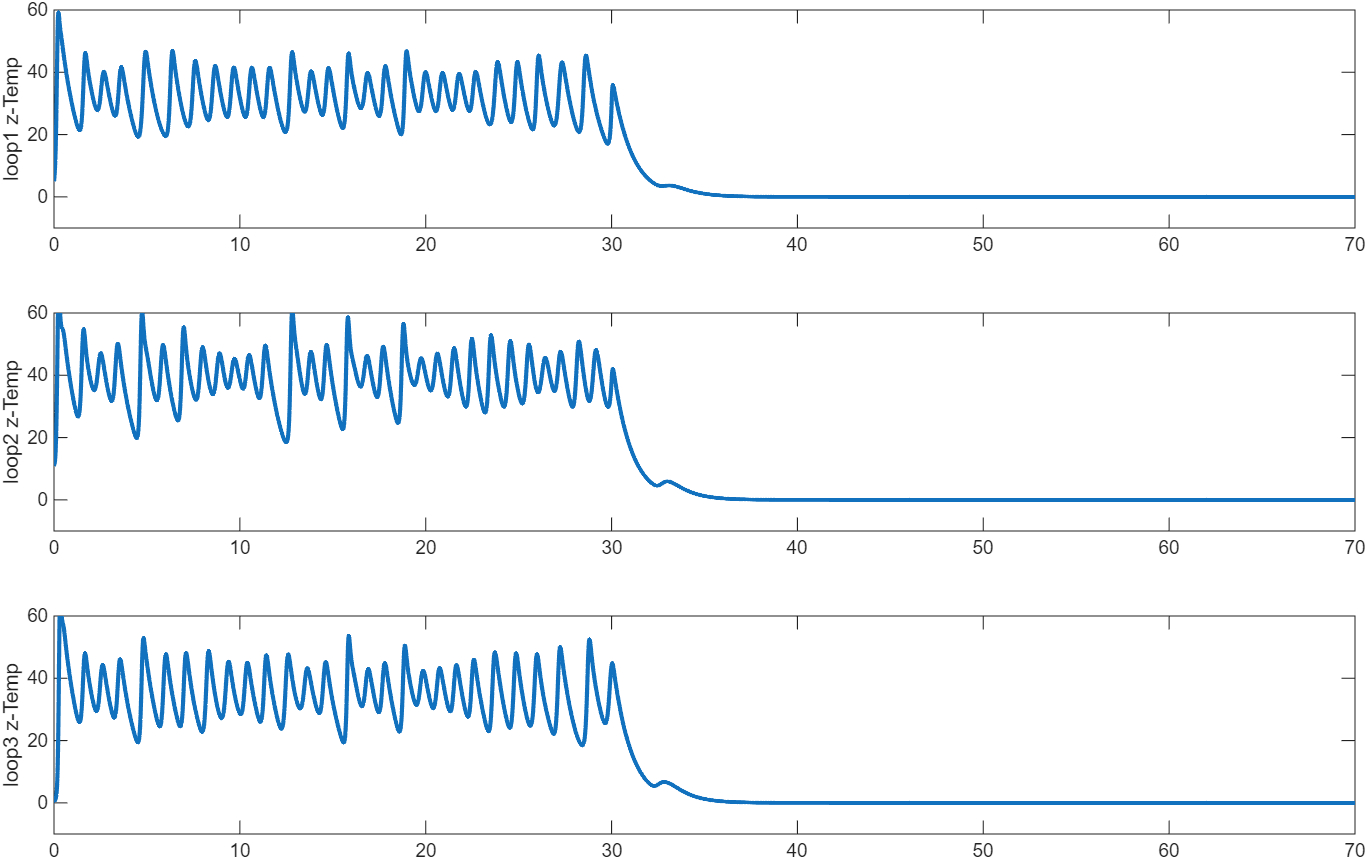}
  \centering
\includegraphics[scale=0.27]{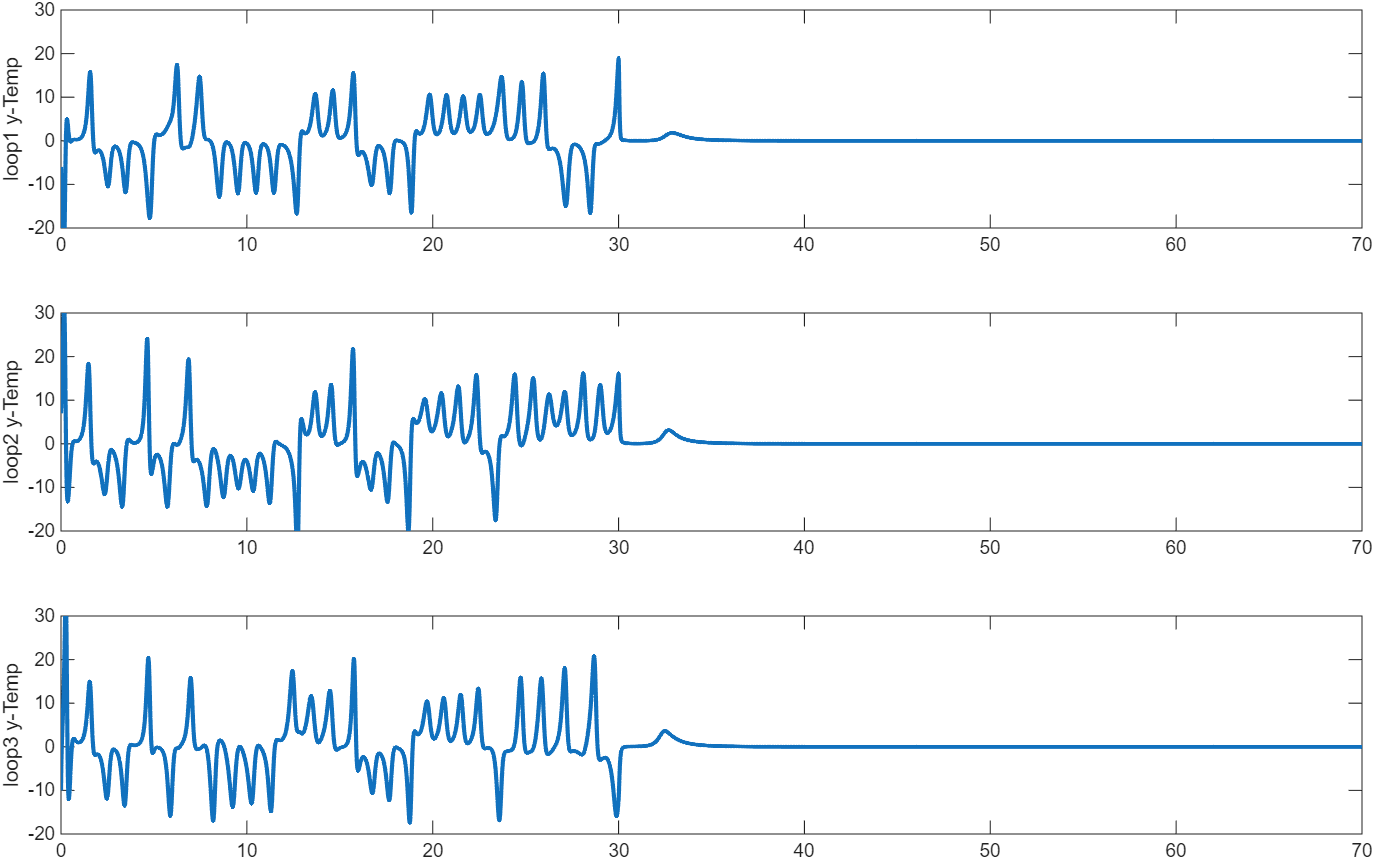}
\caption{(\textbf{a}) Stabilized $y$- temperature state in each loop (\textbf{b}) Stabilized $z$- temperature state in each loop \label{fig2.3}}
\end{figure}

\begin{table}[H]
\caption{Steady state error of the adaptive control system }
\centering
\scriptsize                          
\setlength{\tabcolsep}{3pt}          
\resizebox{\linewidth}{!}{%
\begin{tabular}{|*{9}{c|}}
\hline
$x_1$ & $y_1$ & $z_1$ & $x_2$ & $y_2$ & $z_2$ & $x_3$ & $y_3$ & $z_3$ \\
\hline
     8.9395e-13 & 8.7501e-13 &  1.2399e-16 & 9.0648e-13 &  8.9291e-13 & 1.2677e-16 &  9.0067e-13 & 8.8380e-13 & 1.2559e-16\\
  8.9232e-13 & 8.7342e-13 & 1.2354e-16 & 9.0483e-13 & 8.9128e-13 & 1.2631e-16 &   8.9902e-13 & 8.8219e-13 &  1.2513e-16\\
  8.9069e-13 & 8.7182e-13 & 1.2309e-16 & 9.0318e-13 & 8.8966e-13 & 1.2585e-16 &
    8.9738e-13 & 8.8057e-13  & 1.2468e-16\\
   8.8906e-13 & 8.7023e-13 &  1.2264e-16 & 9.0153e-13 & 8.8803e-13 & 1.2539e-16 &
     8.9574e-13 & 8.7897e-13 &  1.2422e-16\\
   8.8744e-13 & 8.6864e-13 &  1.2219e-16 & 8.9988e-13 & 8.8641e-13 & 1.2493e-16 &
     8.9410e-13 & 8.7736e-13  & 1.2377e-16\\
   8.8582e-13 & 8.6705e-13 & 1.2174e-16 & 8.9824e-13 & 8.8479e-13 & 1.2448e-16 &
  8.9247e-13 & 8.7576e-13 &  1.2332e-16\\
  8.8420e-13 & 8.6547e-13 & 1.2130e-16 & 8.9660e-13 & 8.8317e-13 & 1.2402e-16 &   8.9084e-13 & 8.7416e-13 & 1.2287e-16\\
  8.8259e-13 & 8.6389e-13 & 1.2086e-16 & 8.9496e-13 & 8.8156e-13 & 1.2357e-16 &
8.8921e-13 & 8.7256e-13 & 1.2242e-16\\
  8.8097e-13 & 8.6231e-13 & 1.2042e-16 & 8.9333e-13 & 8.7995e-13 & 1.2312e-16 &
    8.8759e-13 & 8.7097e-13 & 1.2197e-16\\
  8.7937e-13 & 8.6074e-13 & 1.1998e-16 & 8.9169e-13 & 8.7834e-13 & 1.2267e-16 &
  8.8597e-13 & 8.6938e-13 & 1.2153e-16\\
\hline
\end{tabular}%
\label{tab:steaderr1}}
\end{table}

\section{System with Uncertainties and Disturbance Rejection Control}\label{sec3}
The system of differential equations \eqref{Eqn2.1} is known as an idealization of the fluid flows in a triangularly coupled loop thermosyphon system because it only provides an approximation of the dynamics.  During the mathematical modeling process, higher order terms from the Fourier series of the temperature function are dropped to simplify the equations. Also, the uncertainties from the high heat environment are not considered in the differential equations.  All these unmodeled components contribute to the small discrepancies between a good model and the real system.  The question we want to address here is whether we can effectively stabilize the system if we expand mathematical modeling by including the unmodeled components as an unknown function in the dynamic equations. To this end, we modify the original system equations \eqref{Eqn2.1} by incorporating an unknown function that represents the disturbance, a universal term for uncertainties and unmodeled components of the system, as follows
\begin{equation}\label{Eqn3.1}
\begin{cases}
& \dot{x}_1=p\left\{\left(y_1-x_1\right)-\gamma_1\left(x_1-x_2\right)-\gamma_2\left(x_1-x_3\right)\right\} \\
& \dot{y}_1=R_1 x_1-k_1y_1-x_1 z_1 +f_1(y_1,\zeta_1(x_1,z_1,t))+u_1\\
& \dot{z}_1=x_1 y_1-z_1-\eta_1\left(z_1-z_2\right)-\eta_2\left(z_1-z_3\right) \\
& \dot{x}_2=p\left\{\left(y_2-x_2\right)-\gamma_1\left(x_2-x_1\right)-\gamma_3\left(x_2-x_3\right)\right\} \\
& \dot{y}_2=R_2 x_2-k_2y_2-x_2 z_2 +f_2(y_2,\zeta_2(x_2,z_2,t))+u_2\\
& \dot{z}_2=x_2 y_2-z_2-\eta_1\left(z_2-z_1\right)-\eta_3\left(z_2-z_3\right) \\
& \dot{x}_3=p\left\{\left(y_3-x_3\right)-\gamma_2\left(x_3-x_1\right)-\gamma_3\left(x_3-x_2\right)\right\} \\
& \dot{y}_3=R_3 x_3-k_3y_3-x_3 z_3 +f_3(y_3,\zeta_3(x_3,z_3,t))+u_3\\
& \dot{z}_3=x_3 y_3-z_3-\eta_2\left(z_3-z_1\right)-\eta_3\left(z_3-z_2\right)
\end{cases},
\end{equation}
where the unknown functions $f_i$ and $\zeta_i$, $i=1,2,3,$ represent the disturbances, uncertainties, and unmodeled components due to simplification and idealization in the mathematical modeling. We assume they only depend on the local states.  Each of the controllers $u_i$ consists of an approximation $f_i$, denoted by $\hat{f_i}$, and a proportional controller as detailed in Section \ref{sec2}. More specifically,
\begin{align}\label{Eqn3.2}
    u_i=-\hat{f_i}-k_iy_i ,\quad i=1,2,3.
\end{align}
The approximation $\hat{f_i}$ is the output of the extended Luenberger state observer, known as the extended state observer (ESO) in the regime of active disturbance rejection control (ADRC).  The concept of ADRC was first introduced by Han in his seminal paper \cite{ref16} to address the challenges of robust control when there exist significant uncertainties in dynamics and external disturbances \cite{ref17}.  The mechanism of ADRC was systematically and elegantly disseminated by Gao in \cite{ref18}.  Gao also showed that parameterized linear gains make ADRC much more practical to implement in his paper \cite{ref19}. Returning to the controller \eqref{Eqn3.2} in conjunction with \eqref{Eqn3.1}, it is easy to see that the performance of the ADRC controller is dependent on the error between the ESO output and the disturbance, $i.e. |f_i-\hat{f_i}|$. We have shown that \eqref{Eqn3.1} is globally asymptotically stable at equilibrium in section \ref{sec2} without disturbances $f_i$. It is crucial for us to establish the convergence of the extended state observer before we could show that \eqref{Eqn3.1} is stable with the existence of uncertainties in the system.  To this end, we consider the following $n^{th}$ order nonlinear plant to illustrate the ESO design and explore its convergence properties
\begin{align}\label{Eqn3.3}
    y^{(n)}=f(t,y,y',\cdots, y^{(n-1)},\zeta(t))+ u,
\end{align}
where $\zeta(t)$ represents the conglomerate of the external states, uncertainties including disturbances and / or unmodeled components, and $u$ is the controller. The corresponding state-space representation of \eqref{Eqn3.3} is
\begin{align}\label{Eqn3.4}
    \dot{x}_1=x_2, \quad \dot{x}_2=x_3, \cdots, \quad \dot{x}_{n-1}=x_n, \quad \dot{x}_n=x_{n+1}+u, \quad \dot{x}_{n+1}=g(x,\zeta),
\end{align}
where $x_1=y, g=df/dt$, and $x_{n+1}=f$ is known as the extended state or pseudo-state.  One of the outputs from the extended-state observer yields an approximation of this extended state $x_{n+1}$, $i.e.$ the $\hat{f_i}$ in \eqref{Eqn3.2}. The state vector $x$ consists of all the state variables in \eqref{Eqn3.4}.  Any state observer of \eqref{Eqn3.4}, such as the Luenberger state observer, will yield approximation of the derivatives of y and the nonlinear plant $f$ because $f$ is treated as an extended state.  The ESO of \eqref{Eqn3.4} is given by
\begin{align}\label{Eqn3.5}
\begin{cases}
& \dot{\hat{x}}_1=\hat{x}_2+ \beta_1 (x_1-\hat{x}_1) \\
& \dot{\hat{x}}_2=\hat{x}_3+ \beta_2 (x_1-\hat{x}_1) \\
& \cdots\\
& \dot{\hat{x}}_{n-1}=\hat{x}_n+ \beta_{n-1} (x_1-\hat{x}_1) \\
& \dot{\hat{x}}_{n}=\hat{x}_{n+1}+ \beta_{n} (x_1-\hat{x}_1) + u \\
& \dot{\hat{x}}_{n+1}= \beta_{n+1} (x_1-\hat{x}_1) + g(\hat{x}, \zeta),
\end{cases}
\end{align}
where $\hat{x}$ is the state vector of \eqref{Eqn3.5}, and the $\beta_i's$  are the observer gains. They are chosen so that the characteristic polynomial of \eqref{Eqn3.5} is Hurwitz.  A convenient option is to define the observer gains as
\begin{align}\label{Eqn3.6}
    \beta_i= \binom{n+1}{i}B^{i}, \quad i=1,2, \cdots, (n+1),
\end{align}
in which $B$ is the bandwidth of the observer, so that the characteristic polynomial of \eqref{Eqn3.5} turns out to be $p(s)=(s+B)^{n+1}$. Hence, it is Hurwitz because $B>0$. It will be established that $\|x-\hat{x} \|\to 0$, $i.e.$ the observer states and the plant states are locked in together asymptotically. It is sufficient to examine the error system between \eqref{Eqn3.4} and \eqref{Eqn3.5} , which is given by
\begin{align}\label{Eqn3.7}
\begin{cases}
&\dot{e}_1=e_2-\beta_1 e_1\\
&\dot{e}_2=e_3-\beta_2 e_1\\
& \cdots\\
&\dot{e}_{n-1}=e_n-\beta_{n-1} e_1\\
&\dot{e}_{n}=e_{n+1}-\beta_{n} e_1\\
& \dot{\hat{x}}_{n+1}= -\beta_{n+1} e_1 + g(x, \zeta)- g(\hat{x}, \zeta),
\end{cases}
\end{align}
where $e_i=x_i-{\hat{x}_i}$, the error state. Let $e=\left[e_1 \quad e_2 \quad \cdots \quad e_n \quad e_{n+1}\right]^\top$, then the following theorem establishes the asymptotic stability of \eqref{Eqn3.7}, see \cite{ref20} for details of the proof.

\begin{theorem}\label{thm3.1}
Suppose that $g(x,\zeta)$ is Lipschitz with respect to $x$. Then, there exists a bandwidth $B\geq B_0 >0$ such that the system \eqref{Eqn3.7} is asymptotically stable.
\end{theorem}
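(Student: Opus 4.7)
The plan is to rewrite the error system (3.7) compactly as
\[
\dot e = A_B\,e + d(t),\qquad
d(t)=\bigl[0,\dots,0,\;g(x,\zeta)-g(\hat x,\zeta)\bigr]^\top ,
\]
where $A_B$ is the companion-like matrix whose first column is $-(\beta_1,\beta_2,\dots,\beta_{n+1})^\top$ and whose super-diagonal is $1$. Because of the binomial choice $\beta_i=\binom{n+1}{i}B^i$, the characteristic polynomial of $A_B$ is $(s+B)^{n+1}$, so every eigenvalue of $A_B$ equals $-B<0$ and $A_B$ is Hurwitz. The perturbation $d(t)$ is not under our control directly, but Lipschitz continuity of $g$ with constant $L$ gives $\|d(t)\|\le L\|e\|$, which is the key coupling one has to dominate.

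Next I would remove the explicit $B$-dependence of $A_B$ by a standard high-gain rescaling: set $\eta_i = e_i/B^{\,i-1}$ for $1\le i\le n+1$. A direct calculation shows $\dot\eta = B\,\tilde A\,\eta + \tilde d$, where $\tilde A$ is a fixed matrix (independent of $B$) with characteristic polynomial $(s+1)^{n+1}$, and
\[
\tilde d = \bigl[0,\dots,0,\;B^{-n}\bigl(g(x,\zeta)-g(\hat x,\zeta)\bigr)\bigr]^\top .
\]
Because $\tilde A$ is Hurwitz with fixed spectrum, there exists a symmetric positive-definite matrix $P$ (independent of $B$) solving the Lyapunov equation $\tilde A^\top P + P\tilde A = -I$. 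Taking $V(\eta)=\eta^\top P\eta$, one gets
\[
\dot V = -B\|\eta\|^2 + 2\eta^\top P\,\tilde d .
\]
Using $\|e\|^2=\sum_i B^{2(i-1)}\eta_i^2\le B^{2n}\|\eta\|^2$ for $B\ge 1$, the Lipschitz bound yields $\|\tilde d\|\le B^{-n}L\|e\|\le L\|\eta\|$, and hence $\dot V\le -\bigl(B-2L\|P\|\bigr)\|\eta\|^2$. Choosing $B_0:=\max\{1,\,2L\|P\|+1\}$ guarantees $\dot V<0$ for all $B\ge B_0$, which proves asymptotic stability of $\eta$, and therefore of $e$ since the change of coordinates is a diagonal invertible linear map.

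The main obstacle, in my view, is the perturbation term on the last line of (3.7), since $g$ depends on the \emph{plant} states through $x$ rather than on $\hat x$ alone; the Lipschitz hypothesis is exactly what makes this term controllable, but one must be careful about whether the Lipschitz constant is meant globally or only on a domain of interest. If $g$ is only locally Lipschitz, the argument above still gives a ball of attraction whose radius grows with $B$, and one would then need either to invoke boundedness of the plant trajectories (which follows in our coupled-loop setting from the dissipative structure established in Section 2) or to impose a saturation on $\hat x$ when defining the observer. A second, minor technical point is to verify that the binomial gain choice really gives $A_B$ with spectrum $\{-B\}$; this is a short computation showing that the Leverrier/companion form with coefficients $\binom{n+1}{i}B^i$ produces the claimed factored characteristic polynomial. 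With those two details in place, the Lyapunov estimate above closes the proof.
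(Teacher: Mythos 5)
Your proof is correct and in fact supplies the argument the paper itself omits: Theorem \ref{thm3.1} is stated without proof and deferred to \cite{ref20}, and the standard proof there is exactly your route --- the high-gain rescaling $\eta_i=e_i/B^{\,i-1}$ turning the error dynamics into $\dot\eta=B\tilde A\eta+\tilde d$ with a $B$-independent Hurwitz $\tilde A$, followed by the Lyapunov estimate with $P$ solving $\tilde A^\top P+P\tilde A=-I$. Every step checks out (the companion structure giving $(s+B)^{n+1}$, the bound $\|\tilde d\|\le B^{-n}L\|e\|\le L\|\eta\|$ for $B\ge 1$, and the choice $B_0=\max\{1,\,2L\|P\|+1\}$ yielding exponential decay), and your caveat about global versus merely local Lipschitzness of $g$ is exactly the right point to flag, since in the application the gap is closed by the boundedness of the plant trajectories that the paper extracts from dissipativity in the proof of Theorem \ref{thm3.2}.
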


Theorem \ref{thm3.1} implies that $\lim_{t \to \infty} \|e(t)\|=0$ if the disturbance function satisfies some smoothness condition over the state vector provided that the observer bandwidth is large enough.  Now, in light of \eqref{Eqn3.5}, we design an ESO for each of the $y-$equation where the disturbance resides in as follows
\begin{align}\label{Eqn3.8}
\begin{cases}
&\dot{\hat{y}}_{i_1}= \hat{y}_{i_2} +\beta_{i_{1}} (y_{i_1}- \hat{y}_{i_1}) + u_i\\
&\dot{\hat{y}}_{i_2}=  \beta_{i_{2}} (y_{i_1}- \hat{y}_{i_1}) + g_i (\hat{y}_i, \zeta_i) \hspace{2cm} \textit{for } i=1,2,3
\end{cases}
\end{align}
where $\hat{y}_{i_{1}}$ is the ESO output for $y_i$ , and $\hat{y}_{i_2}$  is the approximation of the right hand side of the  $y_i$-equation in \eqref{Eqn3.1}, $i.e.$ let $ F_i=R_ix_i-y_i-x_iz_i+f_i(y_i,\zeta_i)$. Suppose the disturbance function $f_i$ has continuous partial derivative with respect to $y_i$, it is easy to see that $\dfrac{\partial{F_i}}{\partial{y_i}}$ is also continuous. Therefore, $F_i$  is Lipschitz with respect to $y_i$.  According to Theorem \ref{thm3.1}, by designating appropriate values, such as \eqref{Eqn3.6} where $n=1,$ to the observer parameters $\beta{_{{i}_{1,2}}}$ in \eqref{Eqn3.8}, we have $|\hat{y}_{i_2}-F_i| \to 0$. Let $\hat{f_i}(\hat{y}_i,\zeta_i)=\hat{y}_{{i}_{2}}-R_ix_i+y_i+x_iz_i$, then we conclude that
\begin{align}\label{Eqn3.9}
    |\hat{f_i}(\hat{y}_i,\zeta_i) - f_i({y}_i, \zeta_i)| \to 0.
\end{align}
The following theorem concerns the stability of the ADRC-controlled system \eqref{Eqn2.1}.

\begin{theorem}\label{thm3.2}
    The closed loop triangularly coupled thermosyphon system \eqref{Eqn3.1} with disturbances $f_i(y_i,\zeta_i)$, and the controllers $u_i=\hat{f}_i -k_iy_i$, $i=1,2,3,$ where $\hat{f}_i$ is the approximation of the disturbance constructed from the output of the extended state observer \eqref{Eqn3.8}, is globally asymptotically stable at the origin if the following conditions are satisfied
\begin{itemize}
    \item [(i)] $\partial f_i/ \partial y_i, i=1,2,3,$ is continuous
    \item[(ii)]  the observer parameters $\beta_{i_{1}}$ and $\beta_{i_{2}}, i=1,2,3$, satisfy \eqref{Eqn3.6}, where $n=1$ 
    \item[(iii)] state feedback gains $k_{1,2}$ satisfy \eqref{EqnK_i's} and $k_3$ satisfy \eqref{Eqn2.8}.
\end{itemize}
\begin{proof} Consider the Lyapunov function associated with \eqref{Eqn3.1} as follows
   \begin{align*}
       V=\frac{1}{2} \sum_{i=1}^3 \left( \frac{R_i}{p}x_i^2+y_i^2+z_i^2 \right).
   \end{align*} 
The time derivative of $V$ along the state trajectory is written as
\begin{align}\label{Eqn3.10}
    \dot{V} =-x^\top Ax +\sum_{i=1}^3 y_ie_i,
\end{align}
where the state matrix A is given by \eqref{mat1}, $x=[x_1,\quad y_1, \quad z_1, \quad x_2, \quad  y_2, \quad z_2, \quad x_3,\quad y_3,\quad z_3]^\top$, and $e_i=f_i-\hat{f}_i$, the error of the disturbance approximation, after plugging $u_i=-\hat{f}_i-k_iy_i$ in \eqref{Eqn3.1}. Using the Cauchy-Schwarz inequality, and let $\bar{y}=[y_1, \quad y_2,\quad y_3]^\top, \bar{e}=[e_1, \quad e_2, \quad e_3]^\top,$ from \eqref{Eqn3.10} , we have
\begin{align}\label{Eqn3.11}
    \dot{V} \leq -x^\top Ax +\|\bar{y}\| \|\bar{e}\| \leq  -x^\top Ax +\|\bar{x}\| \|\bar{e}\|.
\end{align}
Because the feedback gains $k_i$ satisfy \eqref{Eqn2.8} and \eqref{EqnK_i's}, the state matrix $A$ is positive definite. Therefore, $ -x^\top Ax \leq -\lambda_{\min} \|x\|^2,$ where $\lambda_{\min}$ is the smallest eigenvalue, which satisfies $\lambda_{\min}>0$. It is easy to see that system \eqref{Eqn3.1} is dissipative because the divergence of the
velocity field is negative. Hence, the state trajectories are restricted in bounded region. Hence, there exists $M>0$, such that $0 < \|x\| < M$. With the conditions (i) and (ii) being
satisfied, according to Theorem \ref{thm3.1} and as a result of \eqref{Eqn3.9}, we have $\lim_{t\to \infty} e_i= 0, \quad i=1,2,3$. This implies that, $\forall \varepsilon>0, \exists \quad T_i>0,$ such that $|e_i|< \lambda_{\min}\|x\|/(2\sqrt{3)}$. Let $T^*=\max \{T_1, T_2, T_3\},$ We then have $\|e\| < \lambda_{\min} \|x\|/2$ whenever $t>T^*$. This allows us to rewrite \eqref{Eqn3.11} to get
\begin{align*}
    \dot{V}< -\lambda_{\min}\|x\|^2+\frac{1}{2} \lambda_{\min} \|x\|^2 = -\frac{1}{2} \lambda_{\min} \|x\|^2 < 0.
\end{align*}
Therefore, the closed loop system \eqref{Eqn3.1} with a proportional state feedback control and a disturbance rejection control is globally asymptotically stable at the origin.
\end{proof}
\end{theorem}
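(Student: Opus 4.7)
The plan is to reuse the same quadratic Lyapunov function from the undisturbed case, namely $V=\tfrac12\sum_{i=1}^3\bigl(\tfrac{R_i}{p}x_i^2+y_i^2+z_i^2\bigr)$, and to show that the only change introduced by the disturbance plus ADRC compensation is a residual term driven by the ESO approximation error $e_i=f_i-\hat f_i$. First I would substitute the controller $u_i=-\hat f_i-k_iy_i$ into the $\dot y_i$ equation of \eqref{Eqn3.1}, so that the right-hand side reads $R_ix_i-k_iy_i-x_iz_i+e_i$. All remaining cross-terms in $\dot V$ then match the algebra already performed in the proof of Theorem \ref{thm2.1}: the $x_iy_i$ couplings and the $\gamma_i,\eta_i$ couplings assemble into exactly the quadratic form $-x^\top A x$, while the only leftover contribution is the inner product $\sum_i y_i e_i$. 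This gives the identity \eqref{Eqn3.10}.

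Next I would bound the two terms separately. Because $k_1,k_2,k_3$ satisfy the lower bounds from Lemmas \ref{combined_lemma1&2}--\ref{lemma2.5}, the matrix $A$ is positive definite by Sylvester's criterion, so letting $\lambda_{\min}>0$ denote its smallest eigenvalue we obtain $-x^\top Ax\le -\lambda_{\min}\|x\|^2$. For the perturbation term, Cauchy--Schwarz yields $\bigl|\sum y_ie_i\bigr|\le\|\bar y\|\,\|\bar e\|\le\|x\|\,\|\bar e\|$, which is exactly \eqref{Eqn3.11}. Conditions (i) and (ii) of the theorem are designed precisely so that Theorem \ref{thm3.1} applies to each local ESO \eqref{Eqn3.8}: the Lipschitz hypothesis on $f_i$ in $y_i$ transfers to the aggregated right-hand side $F_i$, and the binomial choice of the observer gains renders the error dynamics Hurwitz. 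Invoking \eqref{Eqn3.9} then gives $e_i(t)\to 0$ as $t\to\infty$ for $i=1,2,3$.

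From here I would finish by combining convergence of the ESO with a boundedness argument so that the estimate $\|\bar e\|\le \tfrac12\lambda_{\min}\|x\|$ eventually holds. Since the velocity field of \eqref{Eqn3.1} has negative divergence, the system is dissipative and trajectories stay in a bounded absorbing region, so there is some $M>0$ with $\|x(t)\|\le M$ for all large $t$. Given any $\varepsilon>0$, the ESO convergence then lets me choose $T^*$ so large that $|e_i(t)|<\lambda_{\min}\|x(t)\|/(2\sqrt3)$ for every $t>T^*$, whence $\|\bar e\|<\tfrac12\lambda_{\min}\|x\|$ and \eqref{Eqn3.11} collapses to $\dot V<-\tfrac12\lambda_{\min}\|x\|^2<0$. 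Radial unboundedness of $V$ then delivers global asymptotic stability.

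The main obstacle I anticipate is the subtle coupling between the ESO error bound and the plant state norm: strictly speaking, the statement ``$|e_i|<\lambda_{\min}\|x\|/(2\sqrt3)$'' cannot be read as a uniform bound on $|e_i|$ because $\|x\|$ could itself be small. The clean way around this is to argue once $\|x\|$ is already small the system has converged, and otherwise the dissipativity-bounded $\|x\|$ is bounded below on the relevant time window so that $\|\bar e\|\to 0$ is enough to produce the factor $\tfrac12$. Getting this interplay right, rather than any calculation in the Lyapunov analysis itself, is where I expect to spend the most care.
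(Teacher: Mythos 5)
Your proposal follows essentially the same route as the paper's own proof: the identical Lyapunov function $V=\tfrac12\sum_{i=1}^3\bigl(\tfrac{R_i}{p}x_i^2+y_i^2+z_i^2\bigr)$, the same decomposition $\dot V=-x^\top Ax+\sum_i y_ie_i$, the same Cauchy--Schwarz and $\lambda_{\min}$ bounds, the same dissipativity argument for boundedness of trajectories, and the same appeal to Theorem \ref{thm3.1} to drive $e_i\to 0$ and obtain $\dot V<-\tfrac12\lambda_{\min}\|x\|^2$. In fact, the delicate point you flag at the end --- that ``$|e_i|<\lambda_{\min}\|x\|/(2\sqrt3)$'' is not a legitimate consequence of $e_i\to 0$ alone when $\|x\|$ may become small --- is present verbatim in the paper's proof without the repair you sketch, so your version is, if anything, more careful on the one genuinely subtle step.
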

It is readily seen from \eqref{Eqn3.11} that the convergence of the ESO plays an essential role in
the stability analysis of \eqref{Eqn3.1}. The smoothness of the overall uncertainty function allows
the ESO to converge at an exponential rate, which in turn boosts the performance of the overall control system.

We demonstrate the performance of the disturbance rejection control system \eqref{Eqn3.1} via some numerical simulations. In this simulation example presented below, we choose the Rayleigh numbers, $R_1=50, R_2=48,$ and $R_3=55$. The thermal and momentum coupling parameters are the same as before. The feedback gains are determined from the stability bounds stated in Theorem \ref{thm2.1}, which are $k_1=48$, $k_2=46$, and $k_3=50$ in this simulation. We simulate the disturbances via functions that depend on the local states of the subsystems, $i.e.$ $f_1=30 \sin(x_1z_1), f_2=x_2y_2 \cos(5t)$, and $f_3= 30 \sin(3t)$. The initial states of the ESO are set at $0$. Lastly, we choose $B= 60 Hz$ as the observer bandwidth through numerical experiments. It shows in Figure \ref{fig4.1}(a) that the system is unstable, even with large state feedback gains, when there are disturbances in the system with the disturbance rejection controller disabled. However, as soon as the ADRC controller is activated, the disturbances are eliminated effectively and the system is stabilized by the proportional controller with smaller gains that satisfy the stability bounds, see Figure \ref{fig4.1}(b). The performance of the ADRC design hinges upon the accuracy of the ESO when it is used to approximate the disturbance signals. It is shown in Figure \ref{fig6},  in this simulation, that the ESO is capable of quickly locking in with the unknown signal and tracking with the disturbance accordingly. Again, we list the steady state errors of the ADRC control system \eqref{Eqn3.1} in Table \ref{tab:steaderr2}. The results agree with the stability analysis given above. It is noted that the errors are not as small as those in Table \ref{tab:steaderr1}. This is because there exists numerical errors or discrepancies between the extended state from the ESO and the disturbance signal. Nevertheless, the numerical results provide strong evidence that the ADRC design in \eqref{Eqn3.1} is feasible to stabilize a high-dimensional multi-compartments dynamical system with disturbances.

\begin{figure}[H]
\centering
  \centering
\includegraphics[scale=0.48]{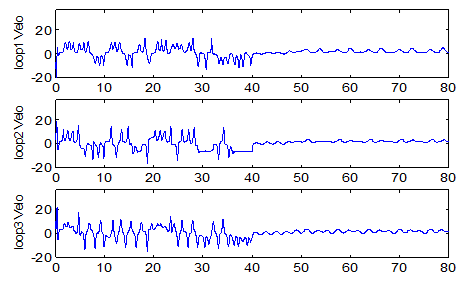}
  \centering
\includegraphics[scale=0.48]{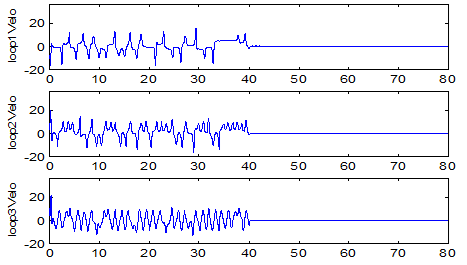}
\caption{Proportional controller activated at $t=40s:$ (\textbf{a}) destabilized fluid flow without ADRC due to existing disturbances; (\textbf{b}) stabilized flow with ADRC activated at $t=40s.$\label{fig4.1}}
\end{figure}

\begin{figure}[H]
\centering\includegraphics[width=8.0cm]{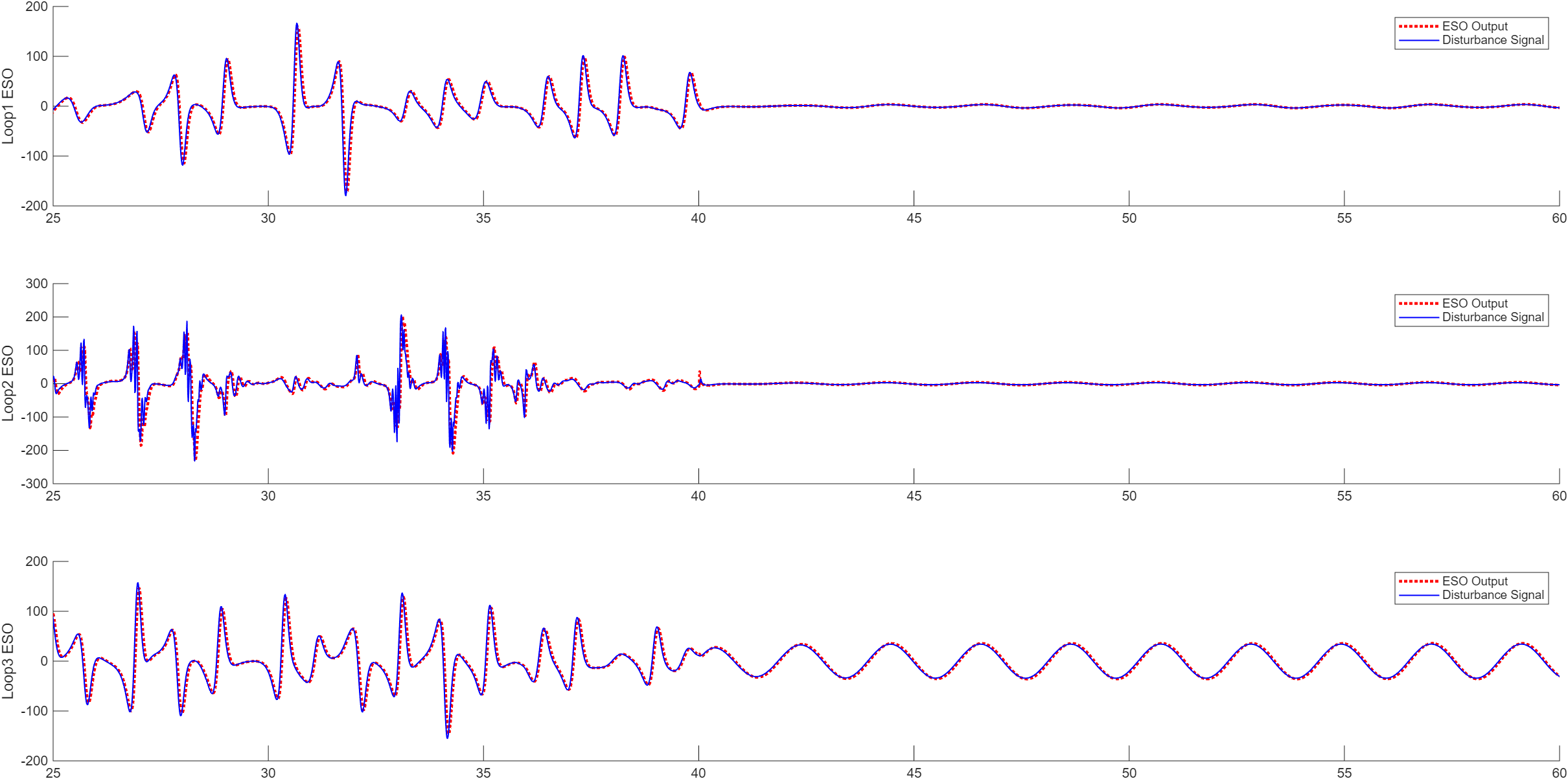}
\caption{ESO approximation of the disturbances in each loop\label{fig6}}
\end{figure}

\begin{table}[H]
\caption{Steady state error of the ADRC system}
\centering
\scriptsize                          
\setlength{\tabcolsep}{3pt}          
\resizebox{\linewidth}{!}{%
\begin{tabular}{|*{9}{c|}}
\hline
$x_1$ & $y_1$ & $z_1$ & $x_2$ & $y_2$ & $z_2$ & $x_3$ & $y_3$ & $z_3$ \\
\hline
   7.5766e-7 & 1.1124e-6 &  5.1001e-7 & 2.5049e-6 & 3.0616e-6 & 6.2432e-7 &   1.1747e-5 & 2.0806e-5 & 1.3621e-6 \\
   1.0290e-6 & 1.3422e-6 & 5.0601e-7 & 2.8179e-6 & 3.3278e-6 & 6.2032e-7  &   1.2142e-5 &  2.1140e-5 &  1.3710e-6\\
  1.2990e-6 & 1.5705e-6 & 5.0212e-7 & 3.1280e-6 &  3.5908e-6 &   6.1656e-7 &   1.2525e-5 & 2.1455e-5 & 1.3810e-6 \\
  1.5674e-6 &  1.7972e-6 & 4.9836e-7 & 3.4352e-6 & 3.8504e-6 & 6.1306e-7 &   1.2897e-5 & 2.1751e-5 & 1.3921e-6 \\
  1.8340e-6 & 2.0221e-6 & 4.9474e-7 & 3.7389e-6 & 4.1062e-6 & 6.0983e-7 &   1.3257e-5 & 2.2027e-5 & 1.4042e-6\\
  2.0986e-6 & 2.2449e-6 & 4.9126e-7 & 4.0391e-6 & 4.3581e-6 & 6.0688e-7 &   1.3605e-5 & 2.2282e-5 & 1.4172e-6\\
  2.3609e-6 & 2.4657e-6 & 4.8794e-7 & 4.3355e-6 & 4.6058e-6 & 6.0421e-7 &   1.3941e-5 & 2.2518e-5 & 1.4311e-6 \\
  2.6208e-6 & 2.6841e-6 &  4.8478e-7 & 4.6276e-6 &  4.8492e-6 &  6.0185e-7 &   1.4264e-5 & 2.2733e-5 & 1.4459e-6\\
  2.8780e-6 & 2.9000e-6 & 4.8180e-7 & 4.9154e-6 & 5.0880e-6 & 5.9980e-7 &   1.4573e-5 & 2.2927e-5 & 1.4615e-6\\
  3.1324e-6 & 3.1133e-6 & 4.7901e-7 & 5.1985e-6 &  5.3220e-6 &  5.9806e-7 &   1.4870e-5 & 2.3101e-5 & 1.4778e-6\\
\hline
\end{tabular}%
\label{tab:steaderr2}}
\end{table}

\section{Conclusions}\label{sec4}
In this paper, we applied two different decentralized controller designs to stabilize a
chaotic triangularly coupled triple loop thermosyphon system. In the case of modelbased control, we derived stability bounds on the feedback gains which depend on the Rayleigh numbers and the momentum coupling intensity parameter. When the system
parameters are unknown, we incorporated additional dynamic equations on the gains for
automatic gain searching. We proved the global asymptotic stability of the decentralized control system in each case. The second approach is based on the ADRC scheme when the system has disturbances. We implemented local extended state observers in the
subsystems to reconstruct the disturbances so that they can be eliminated during the ADRC process. We combined the model-based controller design and the decentralized ESO to stabilize the triple loop system with persistent disturbances. Numerical
simulation results further demonstrate the effectiveness of the proposed decentralized disturbance rejection control with a minimal number of proportional controllers. In this work, we only considered the existence of disturbances in the $y$-equations.  However, we can apply the proposed method in this paper to stabilize the system if the disturbances appear in the $x$-equations and $z$-equations. This is done by either implementing additional ADRC controllers in those equations or by way of ADRC-based backstepping control.   Due to
the simplicity of the proposed controller design, we expect that our results may have practical applications in some large-scale interconnected systems with uncertainties.


\end{document}